\theoremstyle{plain}
\newtheorem{introtheorem}{Theorem}
\newtheorem{thm}{Theorem}[section]
\newtheorem{prop}[thm]{Proposition}
\newtheorem{lem}[thm]{Lemma}
\newtheorem{cor}[thm]{Corollary}
\theoremstyle{definition}
\newtheorem{defn}[thm]{Definition}
\newtheorem{example}[thm]{Example}
\theoremstyle{remark}
\newtheorem{rem}[thm]{Remark}
	\def\MR#1{}
\DeclareMathOperator{\Hom}{Hom}
\DeclareMathOperator{\End}{End}
\DeclareMathOperator{\rank}{rank}
\DeclareMathOperator{\chr}{char}
\DeclareMathOperator{\ord}{ord}
\DeclareMathOperator{\Ann}{Ann}
\DeclareMathOperator{\Cent}{Cent}
\newcommand{\fl}{\mathfrak{l}}
\newcommand{\fp}{\mathfrak{p}}
\newcommand{\cE}{\mathcal{E}}
\newcommand{\cM}{\mathcal{M}}
\newcommand{\cO}{\mathcal{O}}
\newcommand{\F}{\mathbb{F}}
\newcommand{\gm}{\mathbb{G}}
\renewcommand{\H}{\mathbb{H}}
\newcommand{\tF}{\widetilde{F}}
\newcommand{\tfp}{\widetilde{\fp}}
\newcommand{\To}{\longrightarrow}
\newcommand{\La}{\Lambda}
\newcommand{\la}{\lambda}
\newcommand{\twist}[1]{#1\!\left\{\tau\right\}}
\newcommand{\ls}[2]{#1\!\left(\mkern-4mu\left(#2\right)\mkern-4mu\right)}
\newcommand{\dvr}[2]{#1\!\left\llbracket #2\right\rrbracket}
\begin{document}
\title{Isomorphism classes of Drinfeld modules over finite fields}

\author{Valentijn Karemaker}
\address{Mathematical Institute, Utrecht University, Utrecht, The Netherlands}
\email{V.Z.Karemaker@uu.nl}

\author{Jeffrey Katen}
\address{Department of Mathematics, Pennsylvania State University, University Park, Pennsylvania, United States of America}
\email{jmk1097@psu.edu}

\author{Mihran Papikian}
\address{Department of Mathematics, Pennsylvania State University, University Park, Pennsylvania, United States of America}
\email{papikian@psu.edu}

\subjclass[2010]{11G09, 11R58}
\keywords{Drinfeld modules; endomorphism rings; isogeny classes; Gorenstein rings}

\begin{abstract}
We study isogeny classes of Drinfeld $A$-modules over finite fields $k$ with commutative endomorphism algebra $D$, in order to describe the isomorphism classes in a fixed isogeny class. We study when the minimal order $A[\pi]$ of $D$ occurs as an endomorphism ring by proving when it is locally maximal at~$\pi$, and show that this happens if and only if the isogeny class is ordinary or $k$ is the prime field. We then describe how the monoid of fractional ideals of the endomorphism ring $\mathcal{E}$ of a Drinfeld module $\phi$ up to $D$-linear equivalence acts on the isomorphism classes in the isogeny class of $\phi$, in the spirit of Hayes. We show that the action is free when restricted to kernel ideals, of which we give three equivalent definitions, and determine when the action is transitive. In particular, the action is free and transitive on the isomorphism classes in an isogeny class which is either ordinary or defined over the prime field, yielding a complete and explicit description in these cases.
\end{abstract}

\maketitle


\section{Introduction}

Let $\F_q$ be a finite field with $q$ elements. Let $A=\F_q[T]$ be the ring of polynomials in indeterminate $T$ 
with coefficients in $\F_q$, and let $F=\F_q(T)$ be the fraction field of~$A$. 
Given a nonzero prime ideal $\fp$ of $A$,  
we denote $\F_\fp=A/\fp$.
Let $k\cong \F_{q^n}$ be a finite extension of $\F_\fp$. We consider $k$ as an $A$-field via 
$\gamma\colon A \to A/\fp \hookrightarrow k$. 

Let $\tau$ be the Frobenius automorphism of $k$ relative to $\F_q$, that is, the map $\alpha\mapsto \alpha^q$. 
Let $\twist{k}$ be the noncommutative ring of polynomials in $\tau$ with coefficients in $k$ and commutation rule 
$\tau \alpha=\alpha^q\tau$, $\alpha\in k$. A Drinfeld module of rank $r\geq 1$ over $k$ is a ring 
homomorphism $\phi\colon A\to \twist{k}$, $a\mapsto \phi_a$, such that 
\[
\phi_a=\gamma(a)+g_1(a)\tau+\cdots+g_n(a)\tau^n, \quad n=r\deg_T(a). 
\]
(Note that $\phi$ is uniquely determined by $\phi_T$.) 
An isogeny $u\colon \phi\to \psi$ between two Drinfeld modules over $k$ is a nonzero element $u\in \twist{k}$ 
such that $u\phi_a=\psi_a u$ for all $a\in A$ (or equivalently such that $u \phi_T = \psi_T u$). 

The endomorphism ring $\cE\colonequals \End_k(\phi)$ of $\phi$ consists of the zero map and all isogenies $\phi \to \phi$; it is the centralizer of $\phi(A)$ in $\twist{k}$. 
It is known that $\cE$ is a free finitely generated $A$-module 
with $r\leq \rank_A \cE \leq r^2$. 
We introduce a special element, $\pi=\tau^n$, 
the so-called Frobenius of~$k$. Note that $\pi$ lies in the center of $\twist{k}$, and hence belongs to 
$\cE$.

Isogenies define an equivalence relation on the 
set of isomorphism classes of Drinfeld modules over $k$. 
The isogeny class of $\phi$ is determined by the minimal polynomial of~$\pi$ over $F = \phi(F)$, cf.~\cite[Theorem 4.3.2]{DM}. 
Since the properties of these polynomials are well understood, it is known how to classify Drinfeld modules over finite fields up 
to isogeny.

In this article, we investigate the isomorphism classes within a fixed isogeny class. 
This is an important and difficult question in the theory of Drinfeld modules, which can be approached 
from different viewpoints, cf.~\cite{LaumonCDV, GekelerTAMS}.
Our approach is inspired by the 
work of Waterhouse \cite{Waterhouse} in the case of abelian varieties over finite fields and is 
partly aimed at producing efficient algorithms for explicitly computing a representative of each isomorphism class. 
We refer to Section~\ref{sec:AV} for a more in-depth comparison of our results to known results for abelian varieties.\\ 

When $\cE$ is commutative, the endomorphism ring of a Drinfeld module isogenous to $\phi$ is an $A$-order in $F(\pi)$ 
containing $A[\pi]$. We start by investigating the natural question of when $A[\pi]$ 
itself is an endomorphism ring of a Drinfeld module isogenous to $\phi$. We prove the following: 

\begin{introtheorem}\label{thmA} Let $\phi$ be a Drinfeld module over $k$ such that $\End_k(\phi)$ is commutative. 
	Then $A[\pi]$ is the endomorphism ring of a Drinfeld module isogenous to $\phi$ if and only if 
	either $\phi$ is ordinary or $k=\F_\fp$. 
\end{introtheorem}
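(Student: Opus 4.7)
\emph{Step 1: Local--global reduction.} My plan is first to reduce the statement to a condition at the characteristic prime $\fp$. An $A$-order $\cE'\subset D=F(\pi)$ containing $A[\pi]$ equals $A[\pi]$ iff $\cE'\otimes_A A_\fq=A_\fq[\pi]$ for every maximal ideal $\fq$ of $A$. For $\fq\neq\fp$, the Tate module $T_\fq(\phi')$ is free of rank $r$ over $A_\fq$; since $\End_k(\phi')$ is commutative, $D=F(\pi)$ has degree exactly $r$ over $F$, so the minimal polynomial of $\pi$ coincides with its degree-$r$ characteristic polynomial. The commutant of $\pi$ in $\End_{A_\fq}(T_\fq(\phi'))\simeq \Mat_r(A_\fq)$ is therefore $A_\fq[\pi]$, and $\cE'_\fq=A_\fq[\pi]$ automatically for every $\phi'$ in the isogeny class. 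All the content of the theorem is thus local at $\fp$, and amounts to determining when one can realise $\cE'_\fp=A_\fp[\pi]$ by a suitable $\phi'$ isogenous to $\phi$.

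\emph{Step 2: The ``if'' direction.} In the ordinary case, the $\fp$-divisible module of each $\phi'$ in the isogeny class decomposes as a direct sum of an étale part of height $r-1$ and a connected part of height $1$, on which $\pi$ acts with distinct slopes. Quotienting $\phi$ by a suitably chosen $\fp^m$-torsion subgroup scheme produces a $\phi'$ whose local endomorphism ring at $\fp$ is generated by $\pi$, so $\cE'_\fp=A_\fp[\pi]$. When $k=\F_\fp$, so that $n=\deg\fp$ and $\pi=\tau^n$, the map $\gamma$ is surjective and no power $\tau^i$ with $0<i<n$ commutes with all $\phi_T$. A direct computation of the conductor of $A[\pi]$ inside the maximal $A$-order of $D$, after localising at $\fp$, then shows that $A_\fp[\pi]$ is already $\fp$-maximal; combined with the freedom to modify $\phi$ by isogenies whose kernels are supported on $\fp^\infty$-torsion, this produces a $\phi'$ with $\cE'_\fp=A_\fp[\pi]$.

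\emph{Step 3: The ``only if'' direction and main obstacle.} For the converse, assume $\phi$ is non-ordinary and $k\supsetneq \F_\fp$, so $n>\deg\fp$ strictly and the $\fp$-adic Newton polygon of $\pi$ has no slope equal to $0$. The plan is to exhibit, for every $\phi'$ in the isogeny class, an element of $\cE'_\fp$ not lying in $A_\fp[\pi]$. Since $\deg\fp$ properly divides $n$, there is an intermediate power $\tau^{n'}$ with $\deg\fp\le n'<n$ which, by the slope structure at $\fp$, commutes with $\phi'_T$; a controlled $A$-linear modification of $\tau^{n'}$ then yields an endomorphism whose $\fp$-adic valuation in $D$ is strictly smaller than the valuation of any nonzero element of $A_\fp[\pi]$, forcing $\cE'_\fp\supsetneq A_\fp[\pi]$. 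The main obstacle is making this precise: one must compute the conductor of $A[\pi]$ inside the maximal $A$-order of $D$ at the primes above $\fp$ and prove that, precisely when $\deg\fp\mid n$ strictly and the Newton polygon avoids slope~$0$, this conductor is a nontrivial ideal and the ``intermediate Frobenius'' sits strictly outside $A_\fp[\pi]$. Identifying the correct $n'$ and verifying that the corresponding element genuinely lies in the endomorphism ring of every $\phi'$ is the delicate step; it is essentially a local analysis of $\cE'_\fp$ acting on the $\fp$-divisible module, using the slope hypothesis to rule out the possibility that this element is already captured by $A_\fp[\pi]$.
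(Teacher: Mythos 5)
Your Step 1 reduction contains a fundamental error. You assert that for $\fq\neq\fp$ the centralizer of $\pi$ in $\End_{A_\fq}(T_\fq(\phi'))\simeq\Mat_r(A_\fq)$ is automatically $A_\fq[\pi]$, and hence that $\cE'_\fq=A_\fq[\pi]$ for \emph{every} $\phi'$ in the isogeny class. This is false. Since $\dim_F D=r$, the rational Tate module $V_\fq(\phi')$ is free of rank~$1$ over $D_\fq$, so $T_\fq(\phi')$ is a lattice in $D_\fq$; its order $\cE'_\fq$ is then $\{x\in D_\fq\mid xT_\fq(\phi')\subseteq T_\fq(\phi')\}$, which can be \emph{any} $A_\fq$-order in $D_\fq$ containing $A_\fq[\pi]$, depending on the choice of $\phi'$. (Over the field $F_\fq$ the centralizer of a cyclic matrix is indeed the polynomial algebra, but over $A_\fq$ the centralizer of $\pi$ in the stabilizer of a given lattice can be strictly larger than $A_\fq[\pi]$ unless that lattice happens to be free over $A_\fq[\pi]$.) The freedom to adjust $\cE'_\fq$ at primes $\fq\neq\fp$ is real, and controlling all of them simultaneously is precisely the content of the paper's Proposition~\ref{prop:porism}; your proposed reduction erases that entire step.

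A second issue is that the correct necessary-and-sufficient local condition is not ``$A_\fp[\pi]$ maximal at $\fp$'' but rather ``$A[\pi]$ locally maximal at $\pi$'', meaning $A[\pi]_{\tfp}=B_{\tfp}$ at the \emph{unique} place $\tfp$ of $\tF=F(\pi)$ lying over $(\pi)$ of $K=\F_q(\pi)$ (Definition~\ref{def:locmax}). Outside the supersingular case there are other places of $\tF$ over $\fp$ which do not lie over $(\pi)$, and the condition says nothing about those; collapsing the discussion to the $\fp$-adic completion of $A[\pi]$ conflates these. The paper makes this precise by passing to $A'=\F_q[\pi]$ and working $(\pi)$-adically, then translating the condition into the numerical inequality $\lceil n/(Hd)\rceil\leq[\tF:K]/d$ of Theorem~\ref{thm-main} via the ramification and residue data $e_K,f_K,e_F,f_F$; the ``ordinary or $k=\F_\fp$'' dichotomy drops out of analyzing when that inequality is an equality (Corollary~\ref{cor:locmax}), and the sufficiency direction requires the separate lattice-theoretic construction of Proposition~\ref{prop:porism}. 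Finally, several steps in your argument are too coarse to repair on the spot: the claim in Step~3 that non-ordinarity forces the Newton polygon to avoid slope $0$ is wrong for intermediate heights $1<H<r$, and the assertion that some ``intermediate Frobenius'' $\tau^{n'}$ commutes with $\phi'_T$ ``by the slope structure'' is unsubstantiated ($\tau^{n'}$ commutes with $\phi'_T$ only when the coefficients of $\phi'_T$ lie in $\F_{q^{n'}}$, which is not controlled by slope data).
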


Next, we study isogenies from $\phi$ to other Drinfeld modules using the ideals of $\cE$. 
Let $I\trianglelefteq \cE$ be a nonzero ideal. Since $k\{\tau\}$ has a right divison algorithm, we have $k\{\tau\}I = k\{\tau\}u_I$ for some $u_I \in k\{\tau\}$. This element defines an isogeny $u_I\colon \phi \to \psi$, 
where $\psi$ is the Drinfeld module determined by $\psi_T = u_I \phi_T u_I^{-1}$ and is also denoted by $I * \phi$. 
The map $I\mapsto I\ast \phi$ induces a map $S$ from the linear equivalences classes of ideals of $\cE$ 
to the isomorphism classes of Drinfeld modules isogenous to $\phi$. Generally, $S$ is neither injective nor surjective. 

It was observed by Waterhouse \cite{Waterhouse} in the setting of abelian varieties that $S$  
is injective when restricted to ideals of a special type, called \textit{kernel ideals}. 
Kernel ideals were introduced in the context of Drinfeld modules by Yu \cite{JKYu}. 
In Sections~\ref{sec:KerDefs} and~\ref{sec:KerProps}, we revisit Yu's definition, give two other equivalent definitions, and prove 
several general facts about kernel ideals. We also give an explicit example (Example~\ref{ex:nonkernel}) of a rank~$3$ Drinfeld 
module~$\phi$ and an ideal $I\trianglelefteq \End_k(\phi)$ which is not kernel; as far as we know, this is the first 
such explicit example in the published literature. 

In general, we have that $\End_k(I * \phi) \supseteq u_I \mathcal{O}_I u_I^{-1} \cong \mathcal{O}_I$, 
where 
\[
\mathcal{O}_I \colonequals \{ g \in F(\pi) \mid Ig \subseteq I \}.
\]
(Equality holds when $I$ is a kernel ideal; cf.~Lemma~\ref{lem:endker}.) Note that  $\mathcal{O}_I$ 
is an overorder of $\cE$, so $S$ can be surjective only when $\cE$ is the smallest order 
among the endomorphism rings of Drinfeld modules isogenous to $\phi$. 
When $\cE$ is a Gorenstein ring,  we prove that any isogeny $\phi \to \psi$ such that $\mathrm{End}_k(\psi) \cong \mathcal{O}_I$ for some (necessarily kernel) ideal $I \trianglelefteq \mathcal{E}$ arises from the map $S$ via $I \mapsto I * \phi = \psi$. 
In other words, when $\mathcal{E}$ is Gorenstein, the image of $S$ is the set of isomorphism classes in the isogeny class of $\phi$ whose endomorphism rings are overorders of $\mathcal{E}$. Since $A[\pi]$ is a Gorenstein ring, 
we arrive at the following:

\begin{introtheorem}\label{thmB}
	Assume that either $k=\F_\fp$ or the isogeny class that we consider is ordinary, so that there is 
	a Drinfeld module $\phi$ with $\End_k(\phi)=A[\pi]$. Then the map 
$I\mapsto I\ast \phi$ from the linear equivalences classes of ideals of $A[\pi]$ 
	to the isomorphism classes of Drinfeld modules isogenous to $\phi$ is a bijection. 
\end{introtheorem}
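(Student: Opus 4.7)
My plan is to deduce Theorem~B by combining Theorem~A with the two facts about $S$ recalled just before the statement. Theorem~A gives, under either hypothesis, a Drinfeld module $\phi$ with $\mathcal{E} \colonequals \End_k(\phi) = A[\pi]$, and $A[\pi]$ is Gorenstein. With this $\mathcal{E}$ in hand we have (i) $S$ is injective on linear equivalence classes of kernel ideals, and (ii) since $\mathcal{E}$ is Gorenstein, the image of $S$ is precisely the set of isomorphism classes whose endomorphism rings are overorders of $\mathcal{E}$.

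Surjectivity is immediate from (ii): any $\psi$ isogenous to $\phi$ satisfies $\gamma(A) \subseteq \End_k(\psi)$ and $\pi \in \End_k(\psi)$, hence $\End_k(\psi) \supseteq A[\pi] = \mathcal{E}$, so every endomorphism ring in the isogeny class is an overorder of $\mathcal{E}$ and the image of $S$ exhausts it.

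For injectivity I would reduce to (i) by showing that every linear equivalence class of ideals of $A[\pi]$ has a kernel representative, and then chasing the reduction: if $I$ and $J$ are any ideals with $I*\phi \cong J*\phi$, replace each by a linearly equivalent kernel ideal (this preserves the $*$-image up to isomorphism) and invoke (i). When $\mathcal{O}_I = \mathcal{E}$, the Gorenstein property forces $I$ to be invertible, and invertible ideals are kernel ideals via one of the equivalent definitions from Section~\ref{sec:KerDefs}. When $\mathcal{O}_I \supsetneq \mathcal{E}$, I would pass to $\mathcal{O}_I$ (where $I$ again becomes invertible) and use compatibility of the $*$-action with transitions to overorders. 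The main obstacle I anticipate is precisely this reduction: one must track carefully how linear equivalence over $\mathcal{E}$ interacts with the multiplier ring $\mathcal{O}_I$, so that kernel representatives detect the equivalence class faithfully. This is where the Gorenstein property of $A[\pi]$, combined with the equivalence of the three definitions of kernel ideal from Section~\ref{sec:KerDefs}, is essential; without Gorensteinness, non-kernel ideals like the one in Example~\ref{ex:nonkernel} can obstruct the argument.
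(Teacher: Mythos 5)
Your surjectivity argument is correct and matches the paper in spirit. Your injectivity argument, however, has a genuine gap, and the gap is precisely where you anticipate trouble.

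The missing ingredient is Proposition~\ref{prop:Gorker}: when $\mathcal{E}$ is Gorenstein, \emph{every} nonzero ideal of $\mathcal{E}$ is a kernel ideal, not just the proper (invertible) ones. With this in hand there is no reduction to perform at all. Since $A[\pi]$ is Gorenstein, every $A[\pi]$-ideal is a kernel ideal, so the freeness statement Theorem~\ref{thm:idealaction}.(2) applies directly to arbitrary linearly equivalent classes of ideals, and injectivity follows at once. Your alternative route — find a kernel representative in each linear equivalence class — is both unnecessary and, as laid out, unlikely to close. The step ``when $\mathcal{O}_I \supsetneq \mathcal{E}$, pass to $\mathcal{O}_I$, where $I$ again becomes invertible'' relies on every ideal being invertible over its multiplier ring; this characterizes \emph{Bass} orders, not Gorenstein ones. $A[\pi]$ Gorenstein does not guarantee that all its overorders are Gorenstein, so $I$ need not be invertible over $\mathcal{O}_I$. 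Moreover, ``compatibility of the $*$-action with transitions to overorders'' is nowhere established and would need careful justification (the $*$-action is defined with respect to a fixed $\phi$ with $\End_k(\phi) = \mathcal{E}$, not with respect to overorders of $\mathcal{E}$). In short: the reduction you contemplate is both harder than the direct route and has an unfounded step; Proposition~\ref{prop:Gorker} is the lemma you need and should be cited up front.
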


In~\cite{Marseglia}, an algorithm is presented for computing the ideal class monoid of an order in a number field. 
In an ongoing project, we are working on adapting that algorithm to orders in function fields. 
Since for a given ideal $I\trianglelefteq \cE$ computing $I*\phi$ is fairly straightforward, Theorem~\ref{thmB} combined with this algorithm 
will provide an efficient method for computing explicit representatives of isomorphism classes of all 
Drinfeld modules isogenous to $\phi$ such that $\End_k(\phi)\cong A[\pi]$. 
Let us mention that Assong~\cite{Assong1} has recently described a brute-force algorithm to list isomorphism classes, based on a theoretical classification in terms of $j$-invariants and ``fine isomorphy invariants'', and implemented this for certain examples of isogeny classes of Drinfeld modules of rank~$3$. Our methods involve fractional ideals in endomorphism rings rather than invariants and explicit expressions for the coefficients of the Drinfeld module.\\

The outline of the paper is as follows. Section~\ref{sec:locmax} contains our analysis of local maximality of $A[\pi]$ at $\pi$, including 
a key result (Theorem~\ref{thm-main}) 
for the proof of Theorem~\ref{thmA}.
Section~\ref{sec:KerDefs} gives the definitions of kernel ideals and proves their equivalence, and Section~\ref{sec:KerProps} gives properties of kernel ideals and proves that every ideal is a kernel ideal when $\mathcal{E}$ is Gorenstein (Proposition~\ref{prop:Gorker}). Section~\ref{sec:ideals} contain our main results: we find which endomorphism rings can occur in a fixed isogeny class (Proposition~\ref{prop:porism}), study the injectivity and surjectivity of the map $I \mapsto I * \phi$ (Theorem~\ref{thm:idealaction}), and prove when $A[\pi]$ occurs as an endomorphism ring (Corollary~\ref{lem:allEnd}), to obtain Theorem~\ref{thmB} (cf.~Corollary~\ref{cor:end}). Finally, Section~\ref{sec:AV} contains a comparison between the results obtained in this paper and results from the literature (\cite{Waterhouse, DeligneOrdAV, CS1}) on abelian varieties over finite fields.

\section{Local maximality at $\pi$ of the Frobenius order}\label{sec:locmax}

As in the introduction, let $A=\F_q[T]$ and $F=\F_q(T)$, and let $k=\F_{q^n}$ be a finite $A$-field, i.e., a 
field equipped with a homomorphism $\gamma\colon A\to k$. We denote $t=\gamma(T).$
Let $\fp\trianglelefteq A$ be the kernel of $A$. Then  
$\fp$ is a maximal ideal such that $\F_{\fp}\colonequals A/\fp=\F_{q^d}$ is a subfield of $\F_{q^n}$. We call $d$ 
the degree of $\fp$; note that $d$ divides $n$. By slight abuse of notation, we denote the monic generator 
of $\fp$ in $A$ by the same symbol. 

Let   
$
\phi\colon A\to \twist{k}
$
be a Drinfeld module of rank $r$, and let $\pi=\tau^n$.  The results about the endomorphism algebra of $\phi$ 
that we use in this section are well-known and can be found, for example, in \cite{GekelerFDM, LaumonCDV, JKYu}, 
but for the convenience of a single reference and consistency of notation, we will refer to \cite{DM}. 

Let $K\colonequals \F_q(\pi)$ be the fraction field of $\F_q[\pi]\subseteq \twist{k}$ and define 
\[
k(\tau)=\twist{k}\otimes_{\F_q[\pi]} K. 
\]
Then $k(\tau)$ is a central division algebra over $K$ of dimension $n^2$, split at all places of~$K$ except at $(\pi)$ 
and $(1/\pi)$, where its invariants are $1/n$ and $-1/n$, respectively; see \cite[Proposition 4.1.1]{DM}. 
Extend $\phi$ to an embedding $\phi\colon F\to k(\tau)$. Then 
\begin{align*}
\cE\colonequals \End_k(\phi) &=\Cent_{\twist{k}}(\phi(A)), \\ 
D\colonequals \End_k(\phi)\otimes_{\phi(A)} \phi(F) &= \Cent_{k(\tau)}(\phi(F)),
\end{align*}
where $\Cent_R(S)=\{x\in R\mid xs=sx\text{ for all }s\in S\}$ denotes the centralizer of a subset~$S$ of a ring $R$.  
To simplify the notation, we will denote $\phi(A)$ by $A$ and $\phi(F)$ by $F$, with $\phi$ being fixed. 
Let 
\begin{align*}
A' &\colonequals \F_q[\pi],\\ 
\tF &\colonequals F(\pi)=\F_q(\phi_T, \pi).
\end{align*} 
Let $B$ be the integral closure of $A$ in  $\tilde{F}$. 
There is a unique place $\tfp$ in $\tF$ over the place~$(\pi)$ of $K$; see \cite[Theorem 4.1.5]{DM}. 
Let $\tF_{\tfp}\colonequals \tF\otimes_{K} \ls{\F_q}{\pi}$ 
be the completion of~$\tF$ at $\tfp$, and let $B_{\tfp}$ be the ring of integers of $\tF_{\tfp}$.  

\begin{defn}\label{def:locmax} Given an $A$-order $R$ in $B$ containing $\pi$, let 
$$
R_{\tfp} \colonequals R\otimes_{\F_q[\pi]} \dvr{\F_q}{\pi} \subseteq B_{\tfp}. 
$$
We say that $R$ is \textit{locally maximal} at $\pi$ if $R_{\tfp}=B_{\tfp}$; cf. \cite[Definition 3.1]{Angles}. 
\end{defn}	 

\begin{rem}
	Suppose $\cE$ is commutative. Then $\cE$ can be considered as an $A'$-order in $\tF$. 
	It is observed in \cite[p. 164]{JKYu} and  \cite[p. 514]{Angles} that 
	$\cE$ is locally maximal at $\pi$. Therefore, for $A[\pi]$ to be an endomorphism ring 
	of a Drinfeld module isogenous to $\phi$ it is necessary for $A[\pi]$ to be locally maximal at $\pi$. We investigate this condition in 
	this section; later we will show that it is also sufficient; cf. Proposition~\ref{prop:porism}. 
\end{rem}

Let $m(x)$ be the minimal polynomial of $\pi$ over $F$, so that $\tF\cong F[x]/(m(x))$. 
Note that $\pi\in \cE$ is integral over $A$, so the polynomial 
$m(x)$ is monic with coefficients in $A$ and $A[\pi]\cong A[x]/(m(x))$. To analyze the local maximality of $A[\pi]$,  
it will be convenient to change the perspective and express $A[\pi]$ as a quotient of $A'[x]$. 
To do so, consider $T$ and $\pi$ as two independent indeterminates over $\F_q$. 
Then consider $m(\pi)\in \F_q[T, \pi]=A'[T]$ as a polynomial $\widetilde{m}(T)$ in indeterminate $T$ 
with coefficients in $A'$. 

\begin{lem}\label{lem:mtilde}
	We have:
	\begin{enumerate}
	\item $\widetilde{m}(T)$ is irreducible in $K[T]$ and has degree $[\tF:K]$ in $T$. 
	\item The leading coefficient of $\widetilde{m}(T)$ is in $\F_q^\times$. 
	\item Let $\bar{m}(T)\in \F_q[T]$ be the polynomial obtained by reducing the coefficients 
	of $\widetilde{m}(T)$ modulo $\pi$. Then, up to an $\F_q^\times$-multiple, $\bar{m}(T)$ is equal to $\fp^{[\tF:K]/d}$. 
	\end{enumerate}
\end{lem}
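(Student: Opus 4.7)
The plan is to deduce all three parts together from the bivariate structure of $m(\pi)\in\F_q[T,\pi]$, combined with two standard facts about the minimal polynomial of the Frobenius: (i) $m(x)$ divides the characteristic polynomial $P(x)\in A[x]$ of $\pi$ on the Tate module, with $P(x)=m(x)^{r/s}$ where $s\colonequals[\tF:F]=\deg_x m$; and (ii) the constant term of $P$ equals $(-1)^r\mu\,\fp(T)^{n/d}$ for some $\mu\in\F_q^\times$, while the other coefficients obey the Weil bound $\deg_T c_i\leq(r-i)n/r$. Both of these are recorded in \cite[Chapter 4]{DM}.

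I would first settle part~(1) by viewing $m(x)$ as a bivariate polynomial. The surjection $\F_q[T,x]\twoheadrightarrow A[\pi]\subseteq\tF$ sending $x\mapsto\pi$ has a height-one prime kernel in the UFD $\F_q[T,x]$; this kernel is therefore principal and, since $m(x)$ lies in it and is monic in $x$, generated (up to $\F_q^\times$) by $m(x)$. Hence $m(x)$ is irreducible in $\F_q[T,x]$, which means $\widetilde m(T)$ is both irreducible and primitive in $A'[T]=\F_q[\pi][T]$. By Gauss's lemma $\widetilde m(T)$ remains irreducible in $K[T]$, and since $T=\phi_T\in\tF$ is one of its roots, it is (up to a $K^\times$-scalar) the minimal polynomial of $T$ over $K$, so $\deg_T\widetilde m(T)=[\tF:K]$.

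For part~(3), write $m(x)=x^s+b_{s-1}(T)x^{s-1}+\cdots+b_0(T)$ with $b_i\in A$. Reducing $\widetilde m(T)=\pi^s+b_{s-1}(T)\pi^{s-1}+\cdots+b_0(T)$ modulo $\pi$ kills every term except the constant-in-$\pi$ one, so $\bar m(T)=b_0(T)$. Equating constant terms in $P=m^{r/s}$ gives $b_0^{r/s}=\pm\mu\,\fp^{n/d}$, hence $b_0(T)=\nu\,\fp(T)^{ns/(rd)}$ for some $\nu\in\F_q^\times$. It remains to match the exponent with $[\tF:K]/d$, which boils down to $[\tF:K]=ns/r$. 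The Weil bound applied to the roots of $m$ (Galois conjugates of $\pi$, each with $\infty$-adic absolute value $q^{n/r}$) yields $\deg_T b_i\leq(s-i)n/r$, with equality at $i=0$ and strict inequality for $i\geq1$. Therefore $\deg_T\widetilde m(T)=\deg_T b_0=ns/r$, and combined with part~(1) this gives $[\tF:K]=ns/r$ and $b_0(T)=\nu\,\fp(T)^{[\tF:K]/d}$, finishing part~(3).

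Part~(2) is then a corollary of the same degree analysis: the top-$T$-degree contribution to $\widetilde m(T)$ comes solely from $b_0(T)$, because $\pi^s$ contributes $T$-degree~$0$ and each $b_i(T)\pi^i$ with $i\geq1$ has $T$-degree strictly less than $ns/r$. Hence the $T$-leading coefficient of $\widetilde m(T)$ equals that of $b_0(T)=\nu\,\fp(T)^{ns/(rd)}$, which is $\nu\in\F_q^\times$ since $\fp$ is monic. The only non-routine inputs are the Drinfeld-module Weil bound and the explicit form of the constant term of $P$; everything else is a direct degree comparison, so there is no real obstacle once those structural facts are invoked.
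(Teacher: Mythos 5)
Your proof is correct, and the underlying inputs (the Weil-type degree bound on the coefficients of $m$, and the explicit power of $\fp$ appearing as the constant term) are the same ones the paper cites from \cite[Chapter~4]{DM}, but you organize the logic differently in a way that is worth noting. For part~(1), the paper first computes $\deg_T\widetilde m(T)=[\tF:K]$ from the degree estimates, and then concludes irreducibility over $K$ because $\tF=K(T)$ is generated by a root of $\widetilde m$ — that is, irreducibility is deduced \emph{from} the degree count. You instead prove irreducibility directly and independently: the kernel of $\F_q[T,x]\twoheadrightarrow A[\pi]$ is a height-one prime in a UFD, hence principal, and since $m(x)$ is monic in $x$ with coefficients in $A$ and irreducible in $F[x]$ it is (by Gauss) irreducible in $\F_q[T,x]$ and therefore generates that kernel; a second application of Gauss's lemma (in the other variable) then gives irreducibility of $\widetilde m$ over $K$, and the degree identity $\deg_T\widetilde m=[\tF:K]$ follows \emph{from} irreducibility. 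This is arguably cleaner and more self-contained, as it decouples part~(1) from the degree analysis of parts~(2)--(3). You also rederive the identity $[\tF:K]=ns/r$ (with $s=[\tF:F]$) from the Weil-bound degree computation rather than quoting it as \cite[(4.1.3)]{DM}; the paper's route is shorter given the cited references, but yours makes the numerology visibly come out of the constant term $b_0(T)=\nu\,\fp^{ns/(rd)}$. One small stylistic caveat: when you say the Weil bound gives ``strict inequality for $i\ge1$'', what you actually use (and what is true) is simply $\deg_T b_i\le (s-i)n/r<sn/r=\deg_T b_0$ for $i\ge1$; strictness relative to $(s-i)n/r$ is neither claimed by the bound nor needed. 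With that minor clarification, the argument is complete.
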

\begin{proof}
	By \cite[Theorem 4.2.7]{DM}, the degree in $T$ of $m(0)$ is strictly larger than the degrees of the other 
	coefficients of $m(x)$. Hence, the leading term of $\widetilde{m}(T)$ is the leading term of $m(0)\in A$, so its leading coefficient is 
	in $\F_q^\times$. Moreover, by \cite[Theorem 4.2.2 and Theorem 4.2.7]{DM}, up to an $\F_q^\times$-multiple, $m(0)$ is equal to 
	$\fp^{\frac{[\tF:F]\cdot n}{r\cdot d}}$. Using \cite[(4.1.3)]{DM}, we get 
	\[
	(m(0)) = \fp^{\frac{[\tF:K]}{d}}. 
	\]
	Thus, $\deg_T \widetilde{m}(T) = [\tF:K]$. Since $\tF$ is obtained by adjoining a root of $\widetilde{m}(T)$, 
	the equality  $\deg_T \widetilde{m}(T) = [\tF:K]$ implies that $\widetilde{m}(T)$ is irreducible over $K$. 
	Finally, note that $\bar{m}(T)=m(0)$, which completes the proof of the lemma. 
\end{proof}

\begin{lem}\label{key-lem} The following hold:
	\begin{enumerate}
		\item The ideal $\cM$ of $A[\pi]_{\tfp}$ generated by $\pi$ and $\fp$ is maximal.
		\item $A[\pi]_{\tfp}/\cM\cong \F_\fp$. 
		\item The completion $A_\fp$ is a subring of $A[\pi]_{\tfp}$. 
	\end{enumerate}
\end{lem}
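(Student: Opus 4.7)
The plan is to give an explicit description of $A[\pi]_{\tfp}$ as a quotient of a power series ring and then read off all three statements. Using that $\widetilde{m}(T)$ has leading coefficient in $\F_q^\times$ (Lemma~\ref{lem:mtilde}(2)) and degree $[\tF:K]=\rank_{A'} A[\pi]$ in $T$, the natural surjection $A'[T]/(\widetilde{m}(T)) \twoheadrightarrow A[\pi]$ (sending $T\mapsto T$, $\pi\mapsto \pi$) is an isomorphism of $A'$-modules. Tensoring over $A'=\F_q[\pi]$ with $\dvr{\F_q}{\pi}$ yields
\[
A[\pi]_{\tfp} \cong \dvr{\F_q}{\pi}[T]/(\widetilde{m}(T)).
\]

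For (1) and (2), I would reduce this model modulo $\pi$ and invoke Lemma~\ref{lem:mtilde}(3) to obtain
\[
A[\pi]_{\tfp}/(\pi) \cong \F_q[T]/(\fp^{[\tF:K]/d}),
\]
which is a local Artinian ring with maximal ideal generated by $\fp$ and residue field $\F_q[T]/(\fp) = \F_\fp$. Therefore $A[\pi]_{\tfp}/\cM = A[\pi]_{\tfp}/(\pi,\fp) \cong \F_\fp$ is a field, so $\cM$ is maximal with the claimed residue field.

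For (3), the crux is $\fp$-adic completeness of $A[\pi]_{\tfp}$. As a free finite-rank module over the complete DVR $\dvr{\F_q}{\pi}$, the ring $A[\pi]_{\tfp}$ is $\pi$-adically complete. From the computation above, $\cM/(\pi)$ is the nilpotent maximal ideal of the Artinian ring $A[\pi]_{\tfp}/(\pi)$, so $\cM^N \subseteq (\pi) \subseteq \cM$ for some $N$. Consequently, the $\cM$-adic, $\pi$-adic, and (a fortiori) $\fp$-adic topologies on $A[\pi]_{\tfp}$ all coincide, and $A[\pi]_{\tfp}$ is complete in all of them. By the universal property of completion, the inclusion $A \hookrightarrow A[\pi]_{\tfp}$ extends uniquely to a ring homomorphism $A_\fp \to A[\pi]_{\tfp}$. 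For injectivity: $A[\pi]_{\tfp} \subseteq B_{\tfp}$ is a domain, a non-zero element of $A_\fp$ has the form $u\fp^n$ with $u\in A_\fp^\times$, and its image in $A[\pi]_{\tfp}$ is a unit (since the residue of $u$ in $\F_\fp$ is non-zero, so $u$ lies outside the maximal ideal $\cM$) times the non-zero element $\fp^n$.

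The main subtlety is the first step, namely verifying that $A[\pi] \cong A'[T]/(\widetilde{m}(T))$ via the matching of $A'$-ranks; once this concrete model for $A[\pi]_{\tfp}$ is obtained, all three assertions reduce to standard commutative algebra.
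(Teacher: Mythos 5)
Your proof is correct and uses the same explicit model $A[\pi]_{\tfp}\cong \dvr{\F_q}{\pi}[T]/(\widetilde{m}(T))$ as the paper; the arguments for (1) and (2) are essentially identical. Your verification that the surjection $A'[T]/(\widetilde{m}(T))\twoheadrightarrow A[\pi]$ is an isomorphism via rank-counting over the PID $A'$ is a clean way to justify a step the paper leaves implicit (the paper only cites Lemma~\ref{lem:mtilde} and adds a parenthetical about irreducibility of $\widetilde{m}$ over $\ls{\F_q}{\pi}$ to keep things in $B_{\tfp}$).

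For (3), your route differs from the paper's: you deduce completeness directly from the finite-free module structure of $A[\pi]_{\tfp}$ over $\dvr{\F_q}{\pi}$, whereas the paper argues that $A[\pi]_{\tfp}$ is an order in $B_{\tfp}$, hence open and closed in the $\tfp$-adic topology, hence complete, and then notes that the topology induced on $A$ via $A\hookrightarrow B_{\tfp}$ is $\fp$-adic. Your version is a bit more self-contained since it does not invoke the structure of $B_{\tfp}$ at all. One imprecision to flag: the assertion that the $\fp$-adic topology on $A[\pi]_{\tfp}$ coincides ``a fortiori'' with the $\cM$-adic one does not follow from $\fp\in\cM$ alone; that inclusion only shows the $\fp$-adic topology is coarser, and to get coincidence you would need $\cM^M\subseteq(\fp)$ for some $M$ (this is in fact true, but requires an argument, e.g.\ that $\pi$ is nilpotent in $A[\pi]_{\tfp}/(\fp)$). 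Fortunately the claim is not needed: the universal property of $\fp$-adic completion only requires that $A[\pi]_{\tfp}$ be $\cM$-adically complete and separated (which you have, since $\cM$-adic $=\pi$-adic) and that the structure map $A\to A[\pi]_{\tfp}$ send $\fp$ into $\cM$, which is immediate. With that small correction your argument is sound, and the injectivity check at the end, while not strictly necessary (the kernel would be a closed ideal $\fp^nA_\fp$, impossible since $A$ embeds into the domain $A[\pi]_{\tfp}$), is also correct.
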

\begin{proof} We have 
	\begin{align*}
	A[\pi]_{\tfp} & = A[\pi]\otimes_{\F_q[\pi]} \dvr{\F_q}{\pi}\\ 
	& \cong \F_q[\pi][T]/(\widetilde{m}(T))\otimes_{\F_q[\pi]} \dvr{\F_q}{\pi} & (\text{by Lemma \ref{lem:mtilde}}) \\ 
	& \cong \dvr{\F_q}{\pi}[T]/(\widetilde{m}(T)). 
	\end{align*}
(Note that $\tF \cong K[T]/(\widetilde{m}(T))$ and, because there is a unique place in $\tF$ over $\pi$, \cite[proof of Theorem 2.8.5]{DM}  
implies that $\widetilde{m}(T)$ remains irreducible over $\ls{\F_q}{\pi}$.) 

The element $\pi$ of $A[\pi]_{\tfp}$ is not a unit. Now 
\[
A[\pi]_{\tfp}/(\pi)\cong \F_q[T]/(\bar{m}(T))\cong A/\fp^{[\tF:K]/d}. 
\]
This shows that $\fp$ is also not invertible in $A[\pi]_{\tfp}$ and 
\[
A[\pi]_{\tfp}/(\pi, \fp)\cong A/\fp. 
\]
This proves (1) and (2). 

Next, note that $A[\pi]_{\tfp}$ is an order in $B_{\tfp}$ (because $A[\pi]$ is an order in $B$). 
Hence $A[\pi]_{\tfp}$ is open and closed with respect to the $\tfp$-adic topology on $B_{\tfp}$. 
In particular, $A[\pi]_{\tfp}$ is complete. Now the topology on $A$ induced by the 
embedding $A\to A[\pi]\to B_{\tfp}$ is the $\fp$-adic topology. Hence $A\hookrightarrow B_{\tfp}$ 
extends to an embedding $A_\fp\hookrightarrow B_{\tfp}$. Since $A[\pi]_{\tfp}$ is complete, the image of $A_\fp$ 
lies in $A[\pi]_{\tfp}$. This proves (3). 
\end{proof}

Let 
\begin{align*}
[\tF_{\tfp}:K_\pi] &=e_K \cdot f_K, \\ 
[\tF_{\tfp}:F_\fp] &= e_F \cdot f_F,
\end{align*}
where $F_\fp$ (resp. $K_\pi$) denotes the completion of $F$ (resp. $K$) at $\fp$ (resp. $(\pi)$), 
and where~$e$ and $f$ denote the ramification index and the residue degree of the corresponding extension, respectively. 

\begin{prop}\label{propLocMax} $A[\pi]$ is locally maximal at $\pi$ if and only if one of the following holds: 
	\begin{itemize}
		\item $f_F=1$ and $e_F=1;$
		\item $f_F=1$ and $e_K=1$. 
	\end{itemize}
\end{prop}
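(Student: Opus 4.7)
The plan is to analyze $S \colonequals A[\pi]_{\tfp}$ by realizing it concretely as a subring of the discrete valuation ring $B_{\tfp}$ and carrying out a valuation calculation there.

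The first step is to identify $S = A_\fp[\pi]$ as subrings of $B_{\tfp}$. The inclusion $A_\fp[\pi] \subseteq S$ follows from Lemma~\ref{key-lem}(3) together with $\pi \in S$. Conversely, the proof of Lemma~\ref{key-lem} shows $S \cong \dvr{\F_q}{\pi}[T]/(\widetilde{m}(T))$, so $S$ is generated over $\dvr{\F_q}{\pi}$ by the image of $T$; since $T \in A \subseteq A_\fp \subseteq A_\fp[\pi]$ and $A_\fp[\pi]$ is complete as a finite module over the complete DVR $A_\fp$, this forces $S \subseteq A_\fp[\pi]$.

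For necessity, suppose $S = B_{\tfp}$. Comparing residue fields gives $f_F = 1$: by Lemma~\ref{key-lem}(2) the residue field of $S$ has size $q^d$, while that of $B_{\tfp}$ has size $q^{df_F}$, being a degree-$f_F$ extension of the residue field $\F_\fp$ of $F_\fp$. Assuming now $f_F = 1$, so that $B_{\tfp}/A_\fp$ is totally ramified of degree $e_F$, I would rule out $e_F, e_K \geq 2$ as follows. Any nonzero $a \in A_\fp$ has $v_{\tfp}(a) \in e_F \Z_{\geq 0}$, and $v_{\tfp}(\pi^i) = ie_K$, so for each nonzero summand $a_i \pi^i$ of an arbitrary $x = \sum_i a_i \pi^i \in A_\fp[\pi] = S$ one has $v_{\tfp}(a_i\pi^i) \in \{0\} \cup [e_F, \infty)$ when $i = 0$ and $v_{\tfp}(a_i\pi^i) \in [e_K, \infty)$ when $i \geq 1$. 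Under $e_F, e_K \geq 2$, only the $i = 0$ summand with $a_0 \in A_\fp^\times$ has valuation below $2$, and it is then the unique term of minimal valuation, so $v_{\tfp}(x) \in \{0\} \cup [2, \infty]$. Hence $S$ contains no uniformizer of $B_{\tfp}$, contradicting $S = B_{\tfp}$.

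For sufficiency, assume $f_F = 1$. If $e_F = 1$, then $[\tF_{\tfp} : F_\fp] = 1$, so $B_{\tfp} = A_\fp \subseteq S \subseteq B_{\tfp}$ and $S = B_{\tfp}$. If $e_K = 1$, then $v_{\tfp}(\pi) = 1$, so $\pi$ is a uniformizer of $B_{\tfp}$; since $B_{\tfp}/A_\fp$ is totally ramified of degree $e_F$, the set $\{1, \pi, \dots, \pi^{e_F - 1}\}$ is an $A_\fp$-basis of $B_{\tfp}$, and all of these elements lie in $A_\fp[\pi] = S$, so $S = B_{\tfp}$. I expect the main obstacle to be the identification $S = A_\fp[\pi]$ in the first step, which requires reconciling the completion/tensor-product definition of $S$ from Definition~\ref{def:locmax} with the concrete subring of $B_{\tfp}$; once this is available, the proposition reduces to the numerical observation that $1 \in \langle e_F, e_K \rangle \subseteq \Z_{\geq 0}$ if and only if $\min(e_F, e_K) = 1$.
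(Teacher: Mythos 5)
Your proof is correct and takes essentially the same route as the paper: both hinge on Lemma~\ref{key-lem} to pin down the residue field and the maximality of $(\pi,\fp)$ in $A[\pi]_{\tfp}$, and then translate local maximality into the conditions $f_F=1$ and $\min(e_K,e_F)=1$. The only differences are cosmetic: you first make explicit the identification $A[\pi]_{\tfp}=A_\fp[\pi]\subseteq B_{\tfp}$ (which the paper leaves implicit), and for necessity you enumerate valuations of elements of $A_\fp[\pi]$ to see there is no uniformizer, whereas the paper argues dually that since $\pi$ and $\fp$ generate the maximal ideal $\cM=\tfp$ of the DVR $B_{\tfp}$, one of them must have valuation~$1$; likewise for sufficiency the paper invokes the structure theorem $B_{\tfp}=\dvr{\F_\fp}{\fp}$ or $\dvr{\F_\fp}{\pi}$, while you reach the same conclusion via the standard $A_\fp$-basis $\{1,\pi,\dots,\pi^{e_F-1}\}$ of a totally ramified extension with uniformizer $\pi$.
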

\begin{proof}
	Let $\ord_{\tfp}$ be the normalized valuation on $\tF$ corresponding to the place $\tfp$. Then 
	\begin{align*}
	e_K=\ord_{\tfp}(\pi), \\ 
	e_F=\ord_{\tfp}(\fp). 
	\end{align*}

Suppose that $A[\pi]_{\tfp}=B_{\tfp}$. Then, using the notation of Lemma \ref{key-lem}, we have 
$\cM=\tfp$ and $\F_{\tfp} \colonequals B_{\tfp}/\tfp=A[\pi]_{\tfp}/\cM=\F_\fp$. Since $\pi$ and $\fp$ 
generate $\cM$, at least one of them must have $\ord_{\tfp}$ equal to $1$. Hence, either $e_K=1$ or $e_F=1$.  
Next, $f_F$, by definition,  is the degree 
of the extension $\F_{\tfp}/\F_\fp$. 
Hence $f_K=1$. 

Conversely, suppose that one of the given conditions holds. Then the residue field of $B_{\tfp}$ is $\F_\fp$ 
and either $\fp$ or $\pi$ is a uniformizer of $B_{\tfp}$. By the structure theorem of local fields of 
positive characteristic, we have $B_{\tfp}=\dvr{\F_\fp}{\fp}$ or $B_{\tfp}=\dvr{\F_\fp}{\pi}$. 
On the other hand, by Lemma \ref{key-lem}, 
\[
\F_\fp\subseteq A_\fp\cong \dvr{\F_\fp}{\fp}\subseteq A[\pi]_{\tfp}, 
\]
and $\fp, \pi\in A[\pi]_{\tfp}$. Hence $B_{\tfp}\subseteq A[\pi]_{\tfp}$, 
which implies that $B_{\tfp}= A[\pi]_{\tfp}$. 
\end{proof}

\begin{thm}\label{thm-main} Let $H$ be the height of $\phi$ (see \cite[Lemma 3.2.11]{DM} for the definition). Then 
	\[
	\left\lceil \frac{n}{H\cdot d}\right\rceil \leq \frac{[\tF:K]}{d},  
	\]
	with equality if and only if $A[\pi]$ is locally maximal at $\pi$. 
\end{thm}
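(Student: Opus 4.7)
The plan is to extract the single identity $n\,e_F=H\,d\,e_K$ by computing the valuation of $\fp$ in two ways inside the completed division algebra, and then to deduce both the inequality and the equality characterization by elementary arithmetic combined with Proposition~\ref{propLocMax}. By~\cite[Proposition~4.1.1]{DM}, $D_\pi\colonequals\twist{k}\otimes_K K_\pi$ is a central division algebra over $K_\pi$ with Hasse invariant $1/n$, so it admits a unique valuation $w$ extending the normalized $\pi$-adic valuation $v_K$ of $K_\pi$. Taking $w$ with integer values on $D_\pi^\times$ one has $w(\pi)=n$, $w(\tau)=1$ (from $\tau^n=\pi$), and $w|_{k^\times}=0$ (as $k$ is finite). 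Restricted to $\tF_{\tfp}\subseteq D_\pi$, the uniqueness of extensions of $v_K$ forces $w|_{\tF_{\tfp}}=(n/e_K)\,v_{\tfp}$; in particular $e_K\mid n$.

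Next I would compute $w(\fp)$ using the height. Because $\gamma(\fp)=0$, by definition of $H$ the image $\phi_\fp\in\twist{k}$ has the form
\[
\phi_\fp=g_{Hd}\tau^{Hd}+g_{Hd+1}\tau^{Hd+1}+\cdots+g_{rd}\tau^{rd},\qquad g_{Hd}\neq 0.
\]
Each nonzero summand $g_i\tau^i$ has $w$-value exactly $i$, and these are pairwise distinct integers, so the ultrametric inequality in $D_\pi$ becomes an equality and yields $w(\phi_\fp)=Hd$. Identifying $\phi_\fp$ with $\fp\in F\subseteq\tF_{\tfp}$, the first paragraph also gives $w(\fp)=(n/e_K)\,v_{\tfp}(\fp)=(n/e_K)\,e_F$, and equating the two computations delivers the key identity $n\,e_F=H\,d\,e_K$.

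Combined with $f_K=d\,f_F$ (from the residue-field tower $\F_q\subseteq\F_\fp\subseteq\F_{\tfp}$), this gives
\[
\frac{[\tF:K]}{d}=\frac{e_K f_K}{d}=e_K f_F=\frac{n\,e_F f_F}{H\,d}.
\]
The left-hand side is a positive integer (Lemma~\ref{lem:mtilde}) bounded below by $n/(Hd)$, so $\lceil n/(Hd)\rceil\leq[\tF:K]/d$ is automatic. For the equality case I would split on $e_F f_F$: if $e_F f_F=1$ then $e_F=f_F=1$ and $Hd\mid n$; if $e_F f_F\geq 2$, then $n\,e_F f_F/(Hd)=\lceil n/(Hd)\rceil$ forces $n<Hd$, $\lceil n/(Hd)\rceil=1$, and hence $e_K f_F=1$, i.e., $e_K=f_F=1$. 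In either subcase $f_F=1$ together with $e_F=1$ or $e_K=1$, which by Proposition~\ref{propLocMax} is precisely local maximality of $A[\pi]$ at $\pi$; the converse is a one-line substitution into the identity. I expect the main obstacle to be the computation $w(\phi_\fp)=Hd$: one must extend $w$ cleanly from the center to the noncommutative division algebra, verify that the terms $g_i\tau^i$ of the $\tau$-expansion have pairwise distinct $w$-values, and then apply the non-archimedean triangle inequality---once the identity $n\,e_F=H\,d\,e_K$ is secured, the remainder is elementary.
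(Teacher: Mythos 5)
Your proof is correct and follows the same arithmetic skeleton as the paper's: reduce the statement to the elementary inequality $\lceil e_K/e_F\rceil \leq e_K f_F$ among the local ramification and residue degrees, analyze the equality case, and apply Proposition~\ref{propLocMax}. The one genuine difference lies in how you obtain the key identity $n\,e_F = H\,d\,e_K$ (equivalently $n/(Hd) = e_K/e_F$). The paper simply cites \cite[Proposition 4.1.10]{DM}, which gives $H = \frac{r}{[\tF:F]}[\tF_{\tfp}:F_\fp]$, and then unwinds it using $[\tF:K]=e_K f_K$, $f_K = f_F\,d$, and $r/[\tF:F] = n/[\tF:K]$. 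You instead prove the identity from scratch by computing $w(\fp)$ two ways in the completed division algebra $k(\tau)\otimes_K K_\pi$: once via the height, using that the unique valuation $w$ picks out the lowest $\tau$-power of $\phi_\fp$ because the terms $g_i\tau^i$ have pairwise distinct $w$-values, and once via the ramification indices of $\tF_{\tfp}$ over $K_\pi$ and $F_\fp$. This essentially reproves the needed case of the cited proposition inline, which makes your argument more self-contained at the cost of a somewhat longer setup. (One small imprecision: $\twist{k}$ is not a $K$-algebra, so $D_\pi$ should be written as $k(\tau)\otimes_K K_\pi$, but this does not affect the argument.) After the identity is secured, your equality analysis via the case split on $e_F f_F$ is a valid alternative to the paper's direct observation that $\lceil e_K/e_F\rceil = e_K f_F$ forces $f_F=1$ and $e_K=1$ or $e_F=1$; both are routine.
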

\begin{proof}
	We have the following equalities:
	\begin{align*}
	\frac{[\tF:F]}{[\tF:K]} &=\frac{r}{n} & (\text{see \cite[(4.1.3)]{DM}}),\\ 
	[\tF:K] & =e_K\cdot f_K  & (\text{see \cite[(4.1.4)]{DM}}),\\ 
	f_K &=f_F\cdot d & (\text{see \cite[(4.1.6)]{DM}}).
	\end{align*}
	On the other hand, by \cite[Proposition 4.1.10]{DM},   
	\[
	H = \frac{r}{[\tF:F]}[\tF_{\tfp}:F_\fp]. 
	\]
	Hence 
	\[
	H=\frac{n}{[\tF:K]}e_Ff_F=\frac{n e_F f_F}{e_Kf_K}=\frac{n}{d} \frac{e_F}{e_K}. 
	\]
	This implies that 
	\[
	\frac{n}{H\cdot d}=\frac{e_K}{e_F}. 
	\]
	On the other hand, 
	\[
	\frac{[\tF:K]}{d}=\frac{e_Kf_K}{d}=e_K\cdot f_F. 
	\]
	Thus, the inequality of the theorem is equivalent to 
	\[
	\left\lceil \frac{e_K}{e_F}\right\rceil \leq e_K\cdot f_F. 
	\]
	Since $e_K, e_F, f_F$ are positive integers, the above inequality always holds, 
	with equality if and only if 
	$f_F=1$ and either $e_K=1$ or $e_F=1$. Now the theorem follows from Proposition \ref{propLocMax}. 
\end{proof}

\begin{rem}
The advantage of having the inequality of Theorem \ref{thm-main}, rather than the statement 
		of Proposition \ref{propLocMax}, is that instead of computing each of $e_K, e_F, f_F$ individually 
		it combines these numbers  into quantities that are easier to compute. 
\end{rem}

\begin{cor}\label{cor:ord}
	If $H\leq r/[\tF:F]$, then $A[\pi]$ is locally maximal at $\pi$. In particular, if~$\phi$ is ordinary, i.e., $H=1$,  
	then $A[\pi]$ is locally maximal at $\pi$. 
\end{cor}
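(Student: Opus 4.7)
My plan is to deduce the corollary directly from the formulas worked out in the proof of Theorem~\ref{thm-main}, using Proposition~\ref{propLocMax} as the end-step rather than the inequality of Theorem~\ref{thm-main} itself. From that proof we have the identity
\[
H \;=\; \frac{n}{[\tF:K]}\,e_F f_F \;=\; \frac{r}{[\tF:F]}\, e_F f_F,
\]
where the second equality uses $[\tF:F]/[\tF:K]=r/n$. Rearranging, the hypothesis $H \leq r/[\tF:F]$ is equivalent to $e_F f_F \leq 1$. Since $e_F, f_F$ are positive integers, this forces $e_F = f_F = 1$, which is one of the two situations listed in Proposition~\ref{propLocMax}. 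Therefore $A[\pi]$ is locally maximal at $\pi$.

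For the ordinary case $H=1$, I would verify that the hypothesis is automatically satisfied by showing $[\tF:F]\leq r$. This is standard: $\pi=\tau^n$ acts on any Tate module of $\phi$ as a matrix of size $r$, so its characteristic polynomial over $F$ has degree $r$, whence its minimal polynomial has degree at most $r$, i.e.\ $[\tF:F]\leq r$. Consequently $r/[\tF:F]\geq 1 = H$, and the first part applies. (Alternatively, the same identity $H\cdot [\tF:F]=r\cdot e_F f_F$ combined with $e_F f_F\geq 1$ already gives $H\geq r/[\tF:F]$, so the hypothesis pinpoints the boundary case $H=r/[\tF:F]$, and for ordinary $\phi$ the bound $[\tF:F]\leq r$ makes this boundary coincide with $H=1$.)

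There is no real obstacle here: the work has already been done in the proof of Theorem~\ref{thm-main}, and the corollary is essentially a translation of the hypothesis into the language of local invariants $(e_F,f_F,e_K)$ plus an appeal to Proposition~\ref{propLocMax}. The only mildly subtle point is the bound $[\tF:F]\leq r$ used for the ordinary specialization, but this is a standard fact from Drinfeld module theory and can be cited from~\cite{DM}.
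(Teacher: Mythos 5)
Your proof is correct. The identity $H=\frac{r}{[\tF:F]}e_Ff_F$ does follow from the paper's computation in the proof of Theorem~\ref{thm-main} (there written as $H=\frac{n}{[\tF:K]}e_Ff_F$ combined with $r/[\tF:F]=n/[\tF:K]$), and your hypothesis indeed reduces to $e_F f_F\le 1$, forcing $e_F=f_F=1$, which is the first of the two cases in Proposition~\ref{propLocMax}. Your justification of $[\tF:F]\le r$ for the ordinary specialization is also sound (and in fact $[\tF:F]\mid r$, so $r/[\tF:F]$ is an integer).

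The paper's own proof takes a slightly shorter route: rather than unpacking the local invariants, it observes that the hypothesis $H\le r/[\tF:F]=n/[\tF:K]$ is equivalent to $\frac{n}{Hd}\ge\frac{[\tF:K]}{d}$, which combined with the inequality $\lceil\frac{n}{Hd}\rceil\le\frac{[\tF:K]}{d}$ of Theorem~\ref{thm-main} forces equality there, and equality is exactly the local-maximality criterion of Theorem~\ref{thm-main}. So the paper treats Theorem~\ref{thm-main} as a packaged equality criterion, while you descend one level back to Proposition~\ref{propLocMax} and pin down the individual invariants $e_F=f_F=1$. The two arguments are mathematically equivalent (they rest on the same identity $\frac{n}{Hd}=\frac{e_K}{e_F}$ and the same case analysis). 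Your version is a bit more explicit about which local condition is actually achieved; the paper's is a bit more economical, reusing Theorem~\ref{thm-main} as a black box. Your observation that the hypothesis in fact forces the boundary case $H=r/[\tF:F]$ is a nice aside, consistent with the fact that the general inequality always gives $H\ge r/[\tF:F]$.
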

\begin{proof}
	Since $r/[\tF:F]=n/[\tF:K]$, the assumption is equivalent to $n/H\geq [\tF:K]$, which 
	implies that equality in Theorem \ref{thm-main} holds. 
\end{proof}

\begin{cor}\label{cor:Fp}
		If $k=\F_\fp$, i.e., $d=n$, then $A[\pi]$ is locally maximal at $\pi$.
\end{cor}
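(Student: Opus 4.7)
The plan is to deduce the corollary by invoking Theorem~\ref{thm-main} and verifying that its inequality becomes an equality when $d = n$. First, since $d = n$ and the height $H$ is a positive integer, the left-hand side simplifies as $\lceil n/(H\cdot d)\rceil = \lceil 1/H\rceil = 1$. The main step is then to establish the matching upper bound $[\tF:K]/d \leq 1$, which reduces to showing $[\tF:K] \leq n$.

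I would prove the slightly stronger divisibility $[\tF:K]\mid n$. One clean route is to apply the double-centralizer theorem to the subfield $\tF\subseteq k(\tau)$: since $K$ is the center of $k(\tau)$, one has $\Cent_{k(\tau)}(\tF) = \Cent_{k(\tau)}(F) = D$, so $D$ is a central simple algebra over $\tF$, and combining $[k(\tau):K] = [\tF:K]\cdot [D:K]$ with $[D:K] = [D:\tF]\cdot [\tF:K]$ gives $\dim_{\tF}D = (n/[\tF:K])^2$; since this is a positive integer, $[\tF:K]$ divides $n$. Alternatively, $[\tF:F]$ equals the degree of the minimal polynomial of $\pi$ over $F$, which divides the characteristic polynomial of Frobenius (of degree $r$); combined with $[\tF:F]/[\tF:K] = r/n$ from \cite[(4.1.3)]{DM}, this yields $[\tF:K] \leq n$.

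Combining the two bounds, Theorem~\ref{thm-main} forces $1 \leq [\tF:K]/d \leq 1$, so equality holds, and by that theorem $A[\pi]$ is locally maximal at $\pi$. There is no real obstacle: the corollary is a short squeeze argument on top of Theorem~\ref{thm-main}; the only nontrivial input beyond the theorem itself is the (standard) divisibility $[\tF:K]\mid n$.
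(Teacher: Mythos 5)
Your proof is correct, but it takes a genuinely different route from the paper's. The paper's argument is a one-liner: it cites \cite[Proposition 2.1]{GP} (which says $[\tF:F]=r$ when $k=\F_\fp$) and immediately deduces $[\tF:K]=n=d$, so both sides of the inequality in Theorem~\ref{thm-main} equal $1$. You instead prove only the general upper bound $[\tF:K]\leq n$ (which holds for any $k$, not just the prime field, via $[\tF:F]\mid r$ and $[\tF:F]/[\tF:K]=r/n$), and then use the inequality of Theorem~\ref{thm-main} itself as the matching lower bound: since the left-hand side is $1$ when $d=n$, and the theorem asserts it is at most $[\tF:K]/n$, you get $1\leq[\tF:K]/n\leq1$ and the squeeze is complete. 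This is a nice observation that the theorem already encodes the lower bound $[\tF:K]\geq n$ in the prime-field case, so no appeal to \cite{GP} is needed and the argument is more self-contained. Both of your two suggested proofs of $[\tF:K]\leq n$ (the double-centralizer computation giving the divisibility $[\tF:K]\mid n$, and the elementary degree comparison) are valid; the second is closer to what the paper implicitly assumes elsewhere.
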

\begin{proof}
	If $k=\F_\fp$, then $[\tF:F]=r$; see \cite[Proposition 2.1]{GP}. But $[\tF:F]=r$ is equivalent to $[\tF:K]=n$, so $[\tF:K]=d$. 
	Therefore, the inequality of 
	Theorem \ref{thm-main} becomes 
	\[
	1=\left\lceil \frac{1}{H}\right\rceil \leq \frac{[\tF:K]}{d}=1,
	\]
	so it is an equality. 
\end{proof}

\begin{cor}\label{cor:locmax}
	Assume $\End_k(\phi)$ is commutative. Then $A[\pi]$ is locally maximal at $\pi$ 
	if and only if either $\phi$ is ordinary or $k=\F_\fp$. 
\end{cor}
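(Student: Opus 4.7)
The plan is to treat the two implications separately, leveraging the two preceding corollaries for one direction and the structural results from Section~\ref{sec:locmax} for the other.

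The ``if'' direction is essentially free: if $\phi$ is ordinary, Corollary~\ref{cor:ord} gives local maximality, and if $k=\F_\fp$, Corollary~\ref{cor:Fp} does. Neither of these requires commutativity of $\End_k(\phi)$, so I would simply cite them.

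For the ``only if'' direction, I would start by exploiting commutativity of $\cE$. The division algebra $D$ has center $\tF$ and dimension $(r/[\tF:F])^2$ over $\tF$, so $D$ is commutative (equivalently $D=\tF$) if and only if $[\tF:F]=r$. Using the identity $[\tF:F]/[\tF:K]=r/n$ from \cite[(4.1.3)]{DM}, this is in turn equivalent to $[\tF:K]=n$. I would next invoke Proposition~\ref{propLocMax}, which gives two mutually non-exclusive possibilities: either $f_F=e_F=1$, or $f_F=e_K=1$. From the computation in the proof of Theorem~\ref{thm-main}, under commutativity we have
\[
H=\frac{r}{[\tF:F]}[\tF_{\tfp}:F_\fp]=e_F f_F.
\]

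In the first case, $H=1$ and so $\phi$ is ordinary. In the second case, $[\tF:K]=e_K f_K=f_K$; combining with the identity $f_K=f_F\cdot d=d$ from \cite[(4.1.6)]{DM}, we get $[\tF:K]=d$. Since commutativity already forced $[\tF:K]=n$, we conclude $n=d$, i.e., $k=\F_\fp$. This completes the proof.

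The only step where a subtlety could arise is the passage from commutativity of $\cE$ to $[\tF:F]=r$; this is standard but worth a precise reference to \cite{DM}. Everything else is bookkeeping with the numerical invariants $e_F,f_F,e_K,f_K$ already assembled in the proof of Theorem~\ref{thm-main}, so no new ideas are needed.
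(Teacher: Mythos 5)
Your proof is correct, and both directions are handled soundly. The approach is essentially the same as the paper's, just presented at a slightly different level: the paper's argument applies Theorem~\ref{thm-main} directly, observing that under commutativity $[\tF:K]=n$, so the inequality becomes $\lceil n/(Hd)\rceil \le n/d$ with $n/d$ a positive integer, whence equality iff $H=1$ or $n=d$. You instead return to Proposition~\ref{propLocMax} and the formulas $H=e_F f_F$ and $f_K=f_F d$ extracted from the proof of Theorem~\ref{thm-main}, analyzing the two cases $f_F=e_F=1$ and $f_F=e_K=1$ explicitly. This is the same machinery unfolded one step further; it avoids the ceiling arithmetic but requires tracking the four invariants $e_F,f_F,e_K,f_K$ individually. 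One small note: the ``if'' direction via Corollaries~\ref{cor:ord} and~\ref{cor:Fp} is fine, though the paper's single-statement argument handles both directions in one stroke since it establishes an equivalence. Nothing is missing and no step fails.
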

\begin{proof} By \cite[Theorem 4.1.5]{DM}, $\End_k(\phi)$ is commutative if and only if $[\tF:F]=r$. 
	Now, as in the previous proof, $[\tF:K]=n$. The inequality of 
	Theorem \ref{thm-main} becomes 
	\[
	\left\lceil \frac{n}{Hd}\right\rceil \leq \frac{n}{d}. 
	\]
	Since $n/d$ is a positive integer, an equality holds if and only if either $H=1$ or $n=d$. 
\end{proof}

\begin{example}
	Let $\fp=T$, $r=2$, and $n=3$. Let $\phi_T=\tau^2$, so $\phi$ is supersingular. 
	In this case, the characteristic polynomial of the Frobenius is $x^2-T^3$ (since $\pi^2=\tau^6=\phi_T^3$), so $[\tF:F]=2$. 
	Thus, $[\tF:K]=3$. Since $H=2$ and $d=1$, the inequality of Theorem~\ref{thm-main} becomes strict 
	\[
	2=\left\lceil \frac{3}{2\cdot 1}\right\rceil < \frac{3}{1}=3. 
	\]
	Thus, $A[\pi]$ is \textit{not} maximal at $\pi$. 
	One can also see that $A[\pi]$ is not maximal at $\pi$ by directly computing $A[\pi]_{\tfp}$. 
	Indeed, $A[\pi]=A[T\sqrt{T}]$ and $B=A[\sqrt{T}]$. 
	Since $\sqrt{T}$ is the unique prime over $T$, $A[\pi]_{\tfp}=A_T[T\sqrt{T}]\neq A_T[\sqrt{T}]=B_{\tfp}$. 
	Also, note that $\End_k(\phi)= \F_q[\tau]\cong A[\sqrt{T}]$ is the maximal $A$-order in $\tF$. 
\end{example}

\begin{example}
	Suppose $n=6$ and $d=2$. Note that $[\tF:K]$ is divisible by $d$ and divides $n$, 
	(since $r/[\tF:F]=n/[\tF:K]$ and $[\tF:F]$ divides $r$). Hence, $[\tF:K]=2$ or~$6$. The  
	inequality of the theorem becomes 
	\[
	\left\lceil \frac{3}{H}\right\rceil \leq \frac{[\tF:K]}{2}. 
	\]
	Hence $A[\pi]$ is locally maximal at $\pi$ if and only if either $H=1$ or $[\tF:K]=2$. 
	
	For example, when $q=3$, $\fp=T^2+T+2$, $\phi_T=t+\tau^4$, we calculate 
	that $\phi_{\fp}=(2t+1)\tau^2+\tau^8$, which tells us that $H=2$. 
	We also calculate the minimal polynomial for $T$ over $K$, which is given by 
	$\widetilde{m}(x)=x^6+(\pi^2+1)x^3+(\pi^4-\pi^2+2)$. Hence, $[\tF:K]=6$, so $A[\pi]$ 
	is not locally maximal at $\pi$. 
\end{example}

\begin{example}
	Suppose $q=3$, $n=8$, and $\fp=T^2+T+2$. Let
	\[
	\phi_T=t+\tau+(2t+1)\tau^2+2\tau^3+\tau^4.
	\]
	Then, $H=2$ and $\widetilde{m}(x)=x^4+2x^3+2x^2+(2\pi+1)x+\pi^2+\pi+1$, so $[\tF:K]=4$. 
	Thus, 
	\[
	\frac{n}{Hd}=2=\frac{[\tF:K]}{d},
	\]
	so equality in Theorem \ref{thm-main} holds. In this case, $A[\pi]$ is maximal at $\pi$. 
\end{example}

The next four examples show that the quantities in Proposition \ref{propLocMax} are essentially independent of 
each other. 

\begin{example}[Local maximality despite $e_K\neq 1$] 
	Let $q=3$, $\fp=T^2+T+2$, and $k=\F_{q^4}$. Let $\phi_T=t+\tau^2$. By computation, we see 
	that $H=1$ and the minimal polynomial for $T$ over $K$ is given by 
	$\widetilde{m}_T(x)=x^4-x^3+(\pi+2)x^2+(\pi+1)x+\pi^2+1$. In particular, $n/(Hd)=2$ 
	and $[\tF:K]/d=2$. Therefore, $A[\pi]$ is locally maximal at~$\pi$. 
	
	Notice that $e_K/e_F=2$ and $e_Kf_F=2$ imply  that $e_K=2$, $e_F=1$, $f_F=1$, and $f_K=2$. 
\end{example}

\begin{example}[Local maximality despite $e_F\neq 1$] 
Let $q=3$, $\fp=T^2+T+2$, and $k=\F_{q^4}$. Let $\phi_T=t+(t+1)\tau+(t+2)\tau^2+\tau^3$. 
By computation, we see that $H=3$ and the minimal polynomial for $T$ over $K$ 
is given by $\widetilde{m}_T(x)=x^2+x+2\pi^3+2$. In particular, $n/(Hd)=1/3$ and $[\tF:K]/d=1$. 
Therefore, $A[\pi]$ is locally maximal at~$\pi$. 

	Notice that $e_K/e_F=1/3$ and $e_Kf_F=2$ imply that $e_K=1$, $e_F=3$, $f_F=1$, and $f_K=2$. 
\end{example}

\begin{example}[Not locally maximal despite $f_F= 1$] 
	Let $q=3$, $\fp=T^2+T+2$, and $k=\F_{q^6}$. Let $\phi_T=t+\tau+(2t+1)\tau^2$. 
	By computation, we see that $H=2$ and the minimal polynomial for $T$ over $K$ 
	is given by $\widetilde{m}_T(x)=x^6+x^3+\pi^2+2$. In particular, $n/(Hd)=3/2$ and $[\tF:K]/d=3$. 
	Therefore, $A[\pi]$ is not locally maximal at $\pi$. 
	
	Notice that $e_K/e_F=3/2$ and $e_Kf_F=3$ imply that $e_K=3$, $e_F=2$, $f_F=1$, and $f_K=2$. 
\end{example}

\begin{example}[Not locally maximal despite $e_F=e_K= 1$] 
	Assume $d$ is odd, $q$ is odd,  and $n/d=[k:\F_\fp]=2.$ Then there is a supersingular Drinfeld module 
	of rank~$2$ over~$k$ whose minimal polynomial is $x^2+c\fp+c'\fp^2$, where $c, c'\in \F_q^\times$ are such that 
	$c^2-4c'$ is not a square in $\F_q^\times$; see \cite[Example 4.3.6]{DM}. 
	In this case, $\fp$ remains inert in~$\tF$, so $e_F=1$ and $f_F=2$. 
	Since $n/Hd=e_K/e_F$ and $H=2$, we see that $e_K=1$. 
\end{example}


\section{Kernel ideals: Definitions}\label{sec:KerDefs}

We keep the notation of the previous section but from now on we assume that $\cE= \End_k(\phi)$ is commutative. 

Let $I\trianglelefteq \cE$ be a nonzero ideal. Let $\twist{k}I$ be the left ideal of $\twist{k}$ generated by the elements of $I$. 
Then  $\twist{k}I$ is generated by a single element $u_I\in \twist{k}$ since $\twist{k}$ 
has right division algorithm. Thus, $\twist{k}I = \twist{k} u_I$. It follows that 
\[
\twist{k}u_I\phi(A) = \twist{k}I\phi(A) = \twist{k}I = \twist{k} u_I. 
\]
Therefore, $u_I\phi(A)u_I^{-1}\subseteq \twist{k}$. If we set $\psi_T=u_I\phi_Tu_I^{-1}$, then 
$\psi$ is a Drinfeld module over $k$ of rank $r$ and $u_I \colon \phi\to \psi$ is an isogeny. We denote 
$\psi=I\ast \phi$. 

Let $D=\cE\otimes_A F$ be the division algebra of $\phi$. Note that $\cE=D\cap \twist{k}$. 
Hence 
\[
\twist{k}I\cap D\subseteq \twist{k}\cap D=\cE. 
\]
This implies that 
\begin{equation}\label{eq:kernelDE}
\twist{k}I\cap D = (\twist{k}I\cap D)\cap\cE = \twist{k}I\cap (D\cap \cE) = \twist{k}I\cap \cE. 
\end{equation}

\begin{defn}\label{defKI1}
We say that $I$ is a \textit{kernel ideal} if 
$(\twist{k}I)\cap D = I$. This definition is the one in \cite[p. 167]{JKYu}. 
\end{defn}

Next, define 
	\[
	\phi[I]=\bigcap_{\alpha\in I} \ker(\alpha),
	\]
	where $\ker(\alpha)$ denotes the kernel (as a group-scheme) of the twisted polynomial $\alpha\in \twist{k}$ 
	acting on the additive group-scheme $\gm_{a, k}$. 

\begin{lem} We have 
	$\phi[I]=\ker(u_I)$. 
\end{lem}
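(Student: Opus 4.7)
The plan is to prove each inclusion of closed subgroup schemes of $\gm_{a,k}$ directly from the defining identity $\twist{k}I = \twist{k}u_I$. Recall that each twisted polynomial $\alpha=\sum a_i\tau^i\in\twist{k}$ corresponds to the additive polynomial $P_\alpha(x)=\sum a_ix^{q^i}\in k[x]$, that composition in $\twist{k}$ matches composition of additive polynomials ($P_{\beta\alpha}=P_\beta\circ P_\alpha$), and that $\ker(\alpha)$ is the closed subscheme of $\gm_{a,k}$ cut out by $P_\alpha$. Under this dictionary, $\phi[I]=\bigcap_{\alpha\in I}\ker(\alpha)$ is the closed subscheme cut out by the ideal $J=\sum_{\alpha\in I}(P_\alpha)\subseteq k[x]$, and $\ker(u_I)$ is cut out by $(P_{u_I})$. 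So it suffices to show $J=(P_{u_I})$.

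For the inclusion $\ker(u_I)\subseteq\phi[I]$ (equivalently $J\subseteq(P_{u_I})$): every $\alpha\in I$ lies in $\twist{k}I=\twist{k}u_I$, so we may write $\alpha=v_\alpha u_I$ for some $v_\alpha\in\twist{k}$. Passing to additive polynomials, $P_\alpha=P_{v_\alpha}\circ P_{u_I}$, which is a polynomial in $P_{u_I}(x)$ and hence lies in $(P_{u_I})$. Taking the sum over $\alpha$ gives $J\subseteq(P_{u_I})$, i.e. $\ker(u_I)\subseteq\ker(\alpha)$ for every $\alpha\in I$.

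For the reverse inclusion $\phi[I]\subseteq\ker(u_I)$ (equivalently $P_{u_I}\in J$): by definition $u_I\in\twist{k}u_I=\twist{k}I$, so there is a finite expression $u_I=\sum_j w_j\beta_j$ with $w_j\in\twist{k}$ and $\beta_j\in I$. Translating, $P_{u_I}=\sum_j P_{w_j}\!\circ P_{\beta_j}$, and each summand lies in $(P_{\beta_j})\subseteq J$, so $P_{u_I}\in J$.

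Neither step poses any real obstacle; the only point to be slightly careful about is that $k$ is not algebraically closed, so one must work with the scheme-theoretic kernel rather than with $k$-points. This is exactly what the translation $\alpha\leftrightarrow P_\alpha$ handles, after which the argument reduces to the obvious manipulation of the left ideal generated by $u_I$ in $\twist{k}$.
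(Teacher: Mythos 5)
Your proof is correct and follows essentially the same route as the paper's: the first inclusion comes from writing each $\alpha\in I$ as $\alpha = v_\alpha u_I$, and the second from writing $u_I$ as a finite $\twist{k}$-linear combination of elements of $I$. The only difference is presentational: you make the dictionary with additive polynomials and ideals of $k[x]$ explicit, which is a sensible way to make the scheme-theoretic claim precise, whereas the paper leaves it implicit; the underlying argument is the same.
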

\begin{proof} Suppose $\alpha\in I$. Then $\alpha\in \twist{k}I$, so $\alpha=f u_I$. Thus $\ker(u_I)\subseteq \ker(\alpha)$, 
	and consequently $\ker(u_I)\subseteq \phi[I]$. Conversely, we can write 
	\[
	u_I=f_1\alpha_1+\cdots+f_m\alpha_m, 
	\]
	for suitable $f_1, \dots, f_m\in \twist{k}$ and $\alpha_1, \dots, \alpha_m\in I$.  This implies that $\phi[I]\subseteq \ker(u_I)$.  
\end{proof}
	
Each $\ker(\alpha)$, $\alpha\in I$, is an $\cE$-module scheme, so $\phi[I]$ is an $\cE$-module scheme. The annihilator 
$\Ann_{\cE}(\phi[I])$ of this module scheme is an ideal of $\cE$. It follows immediately from the definition that $I\subseteq \Ann_{\cE}(\phi[I])$. 

\begin{defn}\label{defKI2}
	We say that $I$ is a \textit{kernel ideal} if $I=\Ann_{\cE}(\phi[I])$. This definition is the analogue 
	of the definition of this concept in the setting of abelian varieties; see \cite[p. 533]{Waterhouse}. 
\end{defn}
	
\begin{lem}\label{lem:KI12}
We have $\Ann_{\cE}(\phi[I])=\twist{k}I\cap D$, so 
	Definitions \ref{defKI1} and \ref{defKI2} are equivalent. 
\end{lem}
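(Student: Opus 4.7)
The plan is to reduce the lemma to proving the set identity $\Ann_{\cE}(\phi[I]) = \twist{k}u_I \cap \cE$. Indeed, equation \eqref{eq:kernelDE} already gives $\twist{k}I \cap D = \twist{k}I \cap \cE$, and the previous lemma identifies $\phi[I]$ with $\ker(u_I)$, while $\twist{k}I = \twist{k}u_I$ by construction. Once the set identity is in hand, the equivalence of Definitions \ref{defKI1} and \ref{defKI2} is immediate, since each merely asks $I$ to equal the same overideal, namely $\twist{k}I \cap D = \Ann_{\cE}(\phi[I])$.

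The containment $\supseteq$ is a direct check: if $\alpha = g u_I \in \twist{k}u_I \cap \cE$, then the associated additive polynomial factors as $\alpha(x) = g(u_I(x))$, which vanishes scheme-theoretically on $\ker(u_I) = \phi[I]$, so $\alpha \in \Ann_{\cE}(\phi[I])$.

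For the reverse containment, the key tool is right division in $\twist{k}$. Given $\alpha \in \cE$ that annihilates $\phi[I]$, I would write $\alpha = g u_I + \rho$ with $g, \rho \in \twist{k}$ and either $\rho = 0$ or $\deg_\tau \rho < \deg_\tau u_I$. Passing to additive polynomials, $\alpha(x) = g(u_I(x)) + \rho(x)$, so the hypothesis $\alpha|_{\ker(u_I)} = 0$ forces $u_I(x) \mid \rho(x)$ in $k[x]$. Since $\deg_x u_I(x) = q^{\deg_\tau u_I}$ strictly exceeds $\deg_x \rho(x)$ unless $\rho = 0$, one concludes that $\rho = 0$ and hence $\alpha \in \twist{k}u_I \cap \cE$. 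The only delicate point is the dictionary between scheme-theoretic vanishing on $\ker(u_I)$ and right-divisibility by $u_I$ in $\twist{k}$, which I expect to be the main thing worth spelling out carefully; everything else is a routine consequence of the Euclidean structure of $\twist{k}$.
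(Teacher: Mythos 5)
Your proposal is correct and follows essentially the same route as the paper's proof: both establish the two inclusions between $\Ann_{\cE}(\phi[I])$ and $\twist{k}u_I\cap\cE$ using the Euclidean structure of $\twist{k}$. The only real difference is expository: where the paper invokes Laumon's Lemma~2.1.1 (that an endomorphism vanishing scheme-theoretically on $\ker(u_I)$ lies in the left ideal $\twist{k}u_I$), you unfold that citation into a direct right-division-plus-degree argument, and you correctly flag the remaining delicate point --- the translation between scheme-theoretic vanishing on $\ker(u_I)=\Spec\bigl(k[x]/(u_I(x))\bigr)$ and the divisibility $u_I(x)\mid\rho(x)$ in $k[x]$ --- which is precisely the content the cited lemma supplies.
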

\begin{proof}
	Let $J\colonequals \twist{k}I\cap D$ and $J'\colonequals \Ann_{\cE}(\phi[I])$. Suppose $u\in J$. 
	Then $u\in \cE$ and $u=wu_I$ for some $w\in \twist{k}$. But $wu_I$ annihilates $\ker(u_I)=\phi[I]$, 
	so $u\in J'$. This implies that $J\subseteq J'$. Conversely, if $u\in J'$, then by Lemma 2.1.1 in \cite{LaumonCDV} 
	we have $u=wu_I$ for some $w\in \twist{k}$. Hence $u\in \twist{k}u_I\cap \cE=J$, so $J'\subseteq J$. 
\end{proof}

Let $\phi$ and $\psi$ be two Drinfeld module over $k$ of rank $r$. 
Let $\fl$ be a prime not equal to $\fp=\chr_A(k)$. 
Let 
$u\colon \phi\to \psi$ be an isogeny. Then $u$ induces a surjective homomorphism 
${^\phi}\bar{k}\overset{u}{\To} {^\psi}\bar{k}$ of $A$-modules 
with finite kernel, where the notation ${^\phi}\bar{k}$ means that the $A$-module structure on $\bar{k}$ is induced from $\phi\colon A \to k\{\tau\}$ and likewise for $\psi$. From this, we get the short exact sequence 
\[
0\To \Hom_{A_\fl}(F_\fl/A_\fl, {^\phi}\bar{k})\To \Hom_{A_\fl}(F_\fl/A_\fl, {^\psi}\bar{k})\To 
\mathrm{Ext}^1_{A_\fl}(F_\fl/A_\fl , \ker(u)_\fl)\To 0, 
\]
where $\ker(u)_\fl$ denotes the $\fl$-primary part of $\ker(u)$ (this is an \'etale group scheme). Note that 
$T_\fl(\phi)\colonequals \Hom_{A_\fl}(F_\fl/A_\fl, {^\phi}\bar{k})$ is the $\fl$-adic Tate module of $\phi$ and that
\[
\mathrm{Ext}^1_{A_\fl}(F_\fl/A_\fl , \ker(u)_\fl)\cong \Hom_{A_\fl}(A_\fl, \ker(u)_\fl)\cong \ker(u)_\fl.
\]
Hence, $u$ induces an injective homomorphism 
\[
u_\fl\colon T_\fl(\phi)\To T_\fl(\psi)
\]
whose cokernel is isomorphic to 
$\ker(u)_\fl$. On the other hand, on $V_\fl(\phi)\colonequals T_\fl(\phi)\otimes_{A_\fl} F_\fl$, $u_\fl$ induces an isomorphism 
$V_\fl(\phi)\overset{\sim}{\To} V_\fl(\psi)$. Pulling back $T_\fl(\psi)\subseteq V_\fl(\psi)$ via $u_\fl^{-1}$ 
we get an $A_\fl$-lattice $u_\fl^{-1}T_\fl(\psi)$ in $V_\fl(\phi)$ which contains $T_\fl(\phi)$ and a 
short exact sequence 
\begin{equation}\label{eqOverLattice}
0\To T_\fl(\phi)\To u_\fl^{-1}T_\fl(\psi)\To \ker(u)_\fl\To 0. 
\end{equation}

Following \cite[(2.3.6)]{LaumonCDV}, we denote 
\begin{equation}\label{eq:Hl}
H_\fl(\phi)= \Hom_{A_\fl}(T_\fl(\phi), A_\fl). 
\end{equation}
Taking the $A_\fl$-duals of \eqref{eqOverLattice}, we obtain 
\[
0\To \Hom_{A_\fl}(u_\fl^{-1}T_\fl(\psi), A_\fl) \To H_\fl(\phi)\To \mathrm{Ext}^1_{A_\fl}(\ker(u)_\fl, A_\fl)\To 0. 
\]
Note that $\mathrm{Ext}^1_{A_\fl}(\ker(u)_\fl, A_\fl)\cong \Hom_{A_\fl}(\ker(u)_\fl, F_\fl/A_\fl)\cong \ker(u)_\fl$. 
Hence to the isogeny~$u$ there corresponds a canonical sublattice of $H_\fl(\phi)$ whose cokernel is isomorphic to 
$\ker(u)_\fl$. 

Now given a nonzero ideal $I\trianglelefteq \cE$, we would like to describe the sublattice of $H_\fl(\phi)$ corresponding to $u_I$. 
Before doing so we recall an elementary result about the duals of intersections of lattices. 

Let $R$ be a PID with field of fractions $K$. Let $V=K^n$. A lattice in $V$ 
is the $R$-span of a basis of $V$, i.e., a lattice is a free $R$-submodule $\La\subseteq V$ of rank $n$ 
such that $\La K=V$. Fix a basis $\{e_1, \dots, e_n\}$ of $V$ and define a symmetric $K$-bilinear 
pairing $\langle\cdot, \cdot\rangle\colon V\times V\to K$ 
by defining $\langle e_i, e_j\rangle=\delta_{ij}$(= Kronecker symbol) and extending it bilinearly to $V\times V$. 
We identify $V^\ast\colonequals \Hom_K(V, K)$ with the linear functionals on~$V$ and 
take $e_i^\ast(v)=\langle e_i, v\rangle$ as a basis of $V^\ast$. For a lattice $\La$ in $V$, the \textit{dual lattice} 
$\La^\ast\subseteq V^\ast$ is the lattice defined by 
\[
\La^\ast=\{f\in V^\ast\mid f(\la)\in R\text{ for all }\la\in \La\}. 
\]
If we identify $V^\ast$ with $V$ by mapping $e_i^\ast\mapsto e_i$ for all $1\leq i\leq n$, then 
\[
\La^\ast=\{v\in V\mid \langle v, \la\rangle\in R\text{ for all }\la\in \La\}. 
\]
Given two lattices $\La_1, \La_2$ in $V$, it is easy to check that 
\[
\La_1+\La_2=\{\la_1+\la_2\mid \la_1\in \La_1, \la_2\in \La_2\}
\]
is a lattice, and so is 
\[
\La_1\cap \La_2=\{\la\mid \la\in \La_1, \la\in \La_2\}.
\]

\begin{lem} We have 
	\[
	(\La_1\cap \La_2)^\ast = \La_1^\ast+\La_2^\ast. 
	\]
\end{lem}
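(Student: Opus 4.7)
The plan is to establish the easy inclusion directly and obtain the reverse inclusion by ``double duality,'' using two auxiliary facts: the companion identity $(\La_1+\La_2)^\ast = \La_1^\ast\cap \La_2^\ast$, and reflexivity $\La^{\ast\ast}=\La$.

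First, I would verify the inclusion $\La_1^\ast+\La_2^\ast\subseteq (\La_1\cap \La_2)^\ast$ directly. Given $v_i\in \La_i^\ast$ and any $w\in \La_1\cap \La_2$, both $\langle v_1,w\rangle$ and $\langle v_2,w\rangle$ lie in $R$ (since $w$ lies in each $\La_i$), so $\langle v_1+v_2,w\rangle\in R$, placing $v_1+v_2$ in $(\La_1\cap \La_2)^\ast$.

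Next, I would record the companion identity $(\La_1+\La_2)^\ast = \La_1^\ast\cap \La_2^\ast$, which is immediate from the definitions: a vector $v$ satisfies $\langle v,\la_1+\la_2\rangle\in R$ for every $\la_i\in \La_i$ iff (by setting $\la_2=0$ and $\la_1=0$ in turn, and using $K$-bilinearity in the other variable) it pairs into $R$ separately with $\La_1$ and with $\La_2$. I would then prove reflexivity $\La^{\ast\ast}=\La$ by choosing an $R$-basis $f_1,\ldots,f_n$ of $\La$, writing down the dual basis $f_1^\ast,\ldots,f_n^\ast$ in $V$ characterized by $\langle f_i^\ast,f_j\rangle = \delta_{ij}$, and observing that $\La^\ast=\bigoplus_i R f_i^\ast$ (an element $\sum c_i f_i^\ast$ lies in $\La^\ast$ iff $\langle \sum c_i f_i^\ast, f_j\rangle = c_j\in R$ for all $j$); applying the same recipe again gives $\La^{\ast\ast}=\bigoplus_i R (f_i^\ast)^\ast = \bigoplus_i R f_i = \La$.

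To finish, I would apply the companion identity to the pair $\La_1^\ast,\La_2^\ast$, obtaining
\[
(\La_1^\ast+\La_2^\ast)^\ast = \La_1^{\ast\ast}\cap \La_2^{\ast\ast} = \La_1\cap \La_2
\]
by reflexivity. Since $\La_1^\ast+\La_2^\ast$ is itself a lattice (it is a finitely generated torsion-free $R$-submodule of $V$ of full rank, hence free of rank $n$ over the PID $R$), reflexivity applies to it as well, and dualizing the above identity yields $\La_1^\ast+\La_2^\ast = (\La_1\cap \La_2)^\ast$, as desired. The only genuinely non-formal step is the verification of $\La^{\ast\ast}=\La$, which is a bookkeeping exercise with dual bases; the rest is tautological manipulation of the definitions.
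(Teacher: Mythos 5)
The paper omits its own proof of this lemma (it is declared ``fairly straightforward''), so there is nothing in the text to compare against. Your argument is correct. The companion identity $(\La_1+\La_2)^\ast=\La_1^\ast\cap\La_2^\ast$, reflexivity $\La^{\ast\ast}=\La$ established via the dual basis, and the observation that $\La_1^\ast+\La_2^\ast$ is itself a lattice combine, after dualizing $(\La_1^\ast+\La_2^\ast)^\ast=\La_1\cap\La_2$ once more, to give the stated equality. Two small remarks. First, the opening direct verification of $\La_1^\ast+\La_2^\ast\subseteq(\La_1\cap\La_2)^\ast$ is redundant: the double-duality computation already produces the equality, not merely the missing inclusion. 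Second, the step $(f_i^\ast)^\ast=f_i$ in your reflexivity proof silently uses the symmetry of the pairing to pass from $\langle f_i^\ast,f_j\rangle=\delta_{ij}$ to $\langle f_j,f_i^\ast\rangle=\delta_{ij}$; this is harmless since the paper fixes a symmetric pairing, but is worth flagging, and nondegeneracy (which you need for the dual basis to exist) should also be noted, though it is immediate from the Gram matrix being the identity in the $e$-basis. A perhaps more ``straightforward'' alternative, in the sense the authors likely intended, is to apply the elementary divisor theorem to the pair $\La_1,\La_2$ to obtain a $K$-basis $g_1,\dots,g_n$ of $V$ and scalars $a_i,b_i\in K^\times$ with $\La_1=\bigoplus_i R a_i g_i$ and $\La_2=\bigoplus_i R b_i g_i$; the lemma then reduces to the coordinatewise identity $(a_iR\cap b_iR)^{-1}=a_i^{-1}R+b_i^{-1}R$ of fractional ideals in the PID $R$. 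Both routes are valid; yours trades the appeal to simultaneous diagonalization for the reflexivity computation.
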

\begin{proof} The proof is omitted since it is fairly straightforward. 
\end{proof}

Now returning to $u_I$, let $\alpha, \beta\in I$ be nonzero elements. 
The overlattice of $T_\fl(\phi)$ corresponding to 
$\ker(\alpha)\cap \ker(\beta)$ is $\alpha^{-1}T_\fl(\phi)\cap \beta^{-1} T_\fl(\phi)$. 
The sublattice of $H_\fl(\phi)$ corresponding to $\ker(\alpha)_\fl$ is $\alpha H_\fl(\phi)$, so 
$(\alpha^{-1}T_\fl(\phi))^\ast = \alpha H_\fl(\phi)$. From the previous lemma, 
we conclude that the sublattice of $H_\fl(\phi)$ corresponding to $\ker(\alpha)\cap \ker(\beta)$ is 
$\alpha H_\fl(\phi) + \beta H_\fl(\phi)$. Thus, the dual of $u_I^{-1} T_\fl(I\ast \phi)$ is $I H_\fl(\phi)$ and we have proved: 

\begin{lem}\label{lem:IuI}
	The sublattice of $H_\fl(\phi)$ corresponding to $\ker(u_I)_\fl$ is $I H_\fl(\phi)$. 
\end{lem}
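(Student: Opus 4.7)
The plan is to reduce the statement about the ideal $I$ to the single-element case (already worked out in the paragraph preceding the lemma, where the sublattice corresponding to $\ker(\alpha)_\fl$ is identified with $\alpha H_\fl(\phi)$), and then combine via the duality lemma for intersections of lattices.

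First I would pick a finite set of generators $\alpha_1,\ldots,\alpha_m$ of the ideal $I$, which is possible because $\cE$ is a finitely generated $A$-module and hence Noetherian. Then $\twist{k}I = \twist{k}\alpha_1 + \cdots + \twist{k}\alpha_m$, and the first lemma of this section (together with the identity $\phi[I]=\ker(u_I)$) gives
\[
\ker(u_I) \;=\; \phi[I] \;=\; \bigcap_{i=1}^m \ker(\alpha_i).
\]
The overlattice of $T_\fl(\phi)$ associated with this kernel is therefore the intersection $\bigcap_{i=1}^m \alpha_i^{-1}T_\fl(\phi)$ inside $V_\fl(\phi)$, since this correspondence respects intersections of subgroup schemes.

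Next I would iterate the lemma on duals of intersections (which is stated for two lattices but extends immediately to finitely many by induction on $m$) to obtain
\[
\Bigl(\bigcap_{i=1}^m \alpha_i^{-1}T_\fl(\phi)\Bigr)^{\!\ast} \;=\; \sum_{i=1}^m \bigl(\alpha_i^{-1}T_\fl(\phi)\bigr)^{\!\ast} \;=\; \sum_{i=1}^m \alpha_i H_\fl(\phi),
\]
where the last equality uses the already-verified single-element computation $(\alpha^{-1}T_\fl(\phi))^\ast = \alpha H_\fl(\phi)$.

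Finally I would identify $\sum_{i=1}^m \alpha_i H_\fl(\phi)$ with $I H_\fl(\phi)$. One inclusion is immediate from $\alpha_i\in I$. For the reverse inclusion, note that $H_\fl(\phi)$ carries a natural $\cE$-action induced (by $A_\fl$-duality) from the $\cE$-action on $T_\fl(\phi)$; since $I = \sum_i \cE\alpha_i = \sum_i \alpha_i \cE$ (using commutativity of $\cE$), every element of $IH_\fl(\phi)$ is a finite sum of terms of the form $(\alpha_i e)h = \alpha_i(eh) \in \alpha_i H_\fl(\phi)$, giving $I H_\fl(\phi) \subseteq \sum_i \alpha_i H_\fl(\phi)$. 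Combining the three steps yields the claim.

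The argument is essentially an assembly of results already prepared in the text, so no step is a serious obstacle; the most delicate points are purely bookkeeping, namely verifying that the duality lemma extends by induction to an $m$-fold intersection and that the $\cE$-action on $H_\fl(\phi)$ is compatible with the action on $T_\fl(\phi)$ in the way needed to write $IH_\fl(\phi) = \sum_i \alpha_i H_\fl(\phi)$.
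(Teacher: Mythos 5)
Your proposal is correct and follows essentially the same approach as the paper. The paper's own argument is the paragraph immediately preceding the lemma: it computes the single-element case $(\alpha^{-1}T_\fl(\phi))^\ast = \alpha H_\fl(\phi)$, applies the duality-of-intersections lemma to two elements $\alpha,\beta\in I$, and then passes directly to the conclusion $\sum_{\alpha\in I}\alpha H_\fl(\phi) = I H_\fl(\phi)$ without the intermediate step of picking generators; your version spells out the finite-generation and the $\cE$-module structure on $H_\fl(\phi)$, which is just a more explicit form of the same bookkeeping.
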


Let $\cO_k$ be the ring of integers of the unramified extension $F_k$ of $F_\fp$ with residue field~$k$. 
Let $H_\fp(\phi)$ be the Dieudonn\'e module of $\phi$ as defined in \cite[Sec. 2.5]{LaumonCDV}. Recall 
that $H_\fp(\phi)$ is a free $\cO_k$-module of rank $r$ equipped with a $\tau^{\deg(\fp)}$-linear 
map $f_{\phi, \fp}\colon H_\fp(\phi)\to H_\fp(\phi)$ such that  
\[
\begin{cases}
\fp H_\fp(\phi)\subseteq f_{\phi, \fp}(H_\fp(\phi)) \subseteq H_\fp(\phi), \\ 
\dim_{k} (H_\fp(\phi)/f_{\phi, \fp} (H_\fp(\phi)))=1. 
\end{cases}
\]
Let 
\[
\H(\phi)=\prod_{\fl\trianglelefteq A} H_\fl(\phi),
\]
where the product is over all primes of $A$, including $\fp$. According to \cite[Lemma 2.6.2]{LaumonCDV}, 
there is a bijection between the kernels of isogenies $u\colon \phi\to \psi$ and sublattices 
$M=\prod_{\fl\trianglelefteq A} M_\fl\subseteq \H(\phi)$ 
such that $M_\fl = H_\fl(\phi)$ for all but finitely many primes $\fl$ and $M_\fp$ is 
a free $\cO_k$-submodule of rank $r$ of $H_\fp(\phi)$ such that 
\begin{equation}\label{eq:DieudModCondition}
\begin{cases}
\fp M_\fp\subseteq f_{\phi, \fp}(M_\fp) \subseteq M_\fp, \\ 
\dim_{k} (M_\fp/f_{\phi, \fp}(M_\fp))=1. 
\end{cases}
\end{equation}
The quotient $\prod_{\fl\neq \fp}(H_\fl(\phi)/M_\fl)$ defines a unique finite \'etale $k$-subscheme $G^\fp\subseteq \gm_{a, k}$ in $\phi(A)$-modules. 
Similarly, the quotient $\cO_k$-module 
$H_\fp(\phi)/M_\fp$
endowed with the $\tau^{\deg(\fp)}$-linear map induced by $f_{\phi, \fp}$ defines a unique $k$-subscheme 
$G_\fp\subseteq \gm_{a, k}$ in $\phi(A)$-modules. 
The quotient of $\phi$ by $G^\fp\times G_\fp$ is the isogeny corresponding to $M$. 

\begin{prop}\label{prop:sublattice}
	The sublattice of $\H(\phi)$ corresponding to $u_I$ is $I\H(\phi)\colonequals\prod_{\fl} I H_\fl(\phi)$. 
\end{prop}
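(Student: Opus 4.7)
The plan is to decompose $\H(\phi) = \prod_\fl H_\fl(\phi)$ and verify the claim prime-by-prime. For every $\fl \neq \fp$, Lemma~\ref{lem:IuI} already identifies the sublattice of $H_\fl(\phi)$ corresponding to $\ker(u_I)_\fl$ as $IH_\fl(\phi)$, so the remaining work is entirely at the component above $\fp$ in the Dieudonn\'e-module framework of \cite[Lemma 2.6.2]{LaumonCDV}: one must show that the sublattice of $H_\fp(\phi)$ attached to $\ker(u_I)_\fp = \phi[I]_\fp$ is exactly $IH_\fp(\phi)$.

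First I would handle a single nonzero endomorphism $\alpha \in \cE$: its action on $H_\fp(\phi)$ is $\cO_k$-linear and commutes with $f_{\phi,\fp}$, so the isogeny $\alpha\colon \phi\to \phi$ corresponds under Laumon's equivalence to the sublattice $\alpha H_\fp(\phi)\subseteq H_\fp(\phi)$, whose cokernel realizes the Dieudonn\'e module of $\ker(\alpha)_\fp$. Next I would promote the intersection-sum duality used at primes $\fl \neq \fp$ to the $\fp$-adic Dieudonn\'e setting: for $\alpha,\beta\in I$, the sublattice of $H_\fp(\phi)$ corresponding to $\ker(\alpha)_\fp\cap\ker(\beta)_\fp$ should be $\alpha H_\fp(\phi)+\beta H_\fp(\phi)$, which is a formal consequence of Laumon's equivalence once one verifies that sums of admissible sublattices remain admissible in the sense of \eqref{eq:DieudModCondition}. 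Iterating over a finite $A$-module generating set of $I$ and using $\phi[I] = \bigcap_{\alpha\in I}\ker(\alpha)$ then identifies the $\fp$-component as $\sum_{\alpha\in I}\alpha H_\fp(\phi) = IH_\fp(\phi)$.

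Assembling the local contributions yields $\prod_\fl IH_\fl(\phi) = I\H(\phi)$, matching the sublattice associated to $u_I$. The main obstacle I expect is the admissibility verification at $\fp$: one must check that $IH_\fp(\phi)$ is a free $\cO_k$-submodule of rank $r$ satisfying both containments in \eqref{eq:DieudModCondition} with one-dimensional residual quotient. The cleanest route is to identify $H_\fp(\phi)/IH_\fp(\phi)$ directly with the Dieudonn\'e module of $\phi[I]_\fp = \ker(u_I)_\fp$, leveraging that $\phi[I]$ is already realized as the kernel of the single isogeny $u_I$ constructed at the start of the section; this bypasses any delicate ideal-theoretic verification by simply transporting the known admissibility of the sublattice attached to $\ker(u_I)_\fp$ back across the equality $IH_\fp(\phi) = \sum_{\alpha\in I}\alpha H_\fp(\phi)$.
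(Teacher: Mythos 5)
You take essentially the same route as the paper: reduce to $\fl \neq \fp$ via Lemma~\ref{lem:IuI}, then use contravariance of the Dieudonn\'e module at $\fp$. The paper compresses the $\fp$-computation into one observation --- the image of $u_I\colon H_\fp(I\ast\phi)\hookrightarrow H_\fp(\phi)$ is the submodule generated by the $\alpha H_\fp(\phi)$, $\alpha\in I$, because $\twist{k}I=\twist{k}u_I$ --- whereas you unpack this by mimicking the intersection--sum duality used at $\fl\neq\fp$; the two presentations are equivalent.

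However, the admissibility ``obstacle'' you flag is not a real one, and the workaround you propose for it is circular as stated. There is no need to verify \eqref{eq:DieudModCondition} separately for $IH_\fp(\phi)$: contravariance (exactness) of the Dieudonn\'e functor already identifies the sublattice attached to $\phi[I]_\fp=\bigcap_i\ker(\alpha_i)_\fp$, viewed as the kernel of $(\alpha_1,\dots,\alpha_m)\colon\phi\to\phi^m$, as the image of $H_\fp(\phi)^m\to H_\fp(\phi)$, $(y_i)\mapsto\sum_i\alpha_i y_i$, which is $\sum_i\alpha_iH_\fp(\phi)=IH_\fp(\phi)$. Since this sublattice is thereby exhibited as the one attached to an actual finite $k$-subgroup scheme, namely $\ker(u_I)_\fp$, it satisfies \eqref{eq:DieudModCondition} automatically; admissibility is a consequence, not a prerequisite. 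By contrast, your proposed fix --- transporting the known admissibility of the sublattice attached to $\ker(u_I)_\fp$ over to $IH_\fp(\phi)$ --- presupposes the very identification of that sublattice with $IH_\fp(\phi)$ that is being proved, so it does not bypass anything.
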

\begin{proof}
	We already proved this for $\fl\neq \fp$. On the other hand, $H_\fp(\phi)$ is the contravariant 
	Dieudonn\'e module, so $u_I(H_\fp(I\ast\phi))$ is the submodule generated by all $\alpha H_\fp(\phi)$, $\alpha\in I$. 
	Hence $u_I(H_\fp(I\ast\phi))=I H_\fp(\phi)$. 
\end{proof}

\begin{defn}\label{defKI3}
	Let $I$ be a nonzero ideal of $\cE$. We say that $I$ is a \textit{kernel ideal} if for any ideal $J\trianglelefteq \cE$ 
	the inclusion $J\H(\phi) \subseteq I\H(\phi)$ implies $J\subseteq I$. 
\end{defn}

\begin{lem}\label{lem:KI23}
	Definitions \ref{defKI2} and \ref{defKI3} are equivalent. 
\end{lem}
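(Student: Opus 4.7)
The plan is to reduce both definitions to a common form using Proposition~\ref{prop:sublattice} together with the bijection of \cite[Lemma 2.6.2]{LaumonCDV}. The crucial point I will exploit is that the bijection is inclusion-reversing: the sublattice of $\H(\phi)$ associated to an isogeny $u\colon\phi\to\psi$ is constructed so that its cokernel recovers $\ker(u)$ (at $\fl\neq\fp$ this is visible from the short exact sequence $0\to \Hom_{A_\fl}(u_\fl^{-1}T_\fl(\psi),A_\fl)\to H_\fl(\phi)\to\ker(u)_\fl\to 0$ derived earlier, and at $\fp$ it follows from contravariance of the Dieudonn\'e module used in Proposition~\ref{prop:sublattice}). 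Combined with Proposition~\ref{prop:sublattice}, which identifies $I\H(\phi)$ as the sublattice associated to $\phi[I]$, this gives
\[
J\H(\phi)\subseteq I\H(\phi)\quad\Longleftrightarrow\quad \phi[I]\subseteq \phi[J].
\]
Since $\phi[J]=\bigcap_{\beta\in J}\ker(\beta)$, the right-hand inclusion is equivalent to saying every $\beta\in J$ annihilates $\phi[I]$, i.e., $J\subseteq \Ann_{\cE}(\phi[I])$. Thus the first step I will carry out is to record the master identity
\[
J\H(\phi)\subseteq I\H(\phi)\quad\Longleftrightarrow\quad J\subseteq \Ann_{\cE}(\phi[I]).
\]

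With this identity in place, both implications become formal. For Definition~\ref{defKI2} $\Rightarrow$ Definition~\ref{defKI3}: if $I=\Ann_{\cE}(\phi[I])$ and $J\H(\phi)\subseteq I\H(\phi)$, then $J\subseteq\Ann_{\cE}(\phi[I])=I$. For Definition~\ref{defKI3} $\Rightarrow$ Definition~\ref{defKI2}: the inclusion $I\subseteq \Ann_{\cE}(\phi[I])$ holds automatically from the definition of $\phi[I]$, so only the reverse inclusion requires work; I will apply the hypothesis of Definition~\ref{defKI3} to the ideal $J:=\Ann_{\cE}(\phi[I])$, for which the identity makes the required containment $J\H(\phi)\subseteq I\H(\phi)$ tautological, yielding $J\subseteq I$.

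The only real obstacle is correctly pinning down the direction of inclusion in the bijection between sublattices and kernels of isogenies, where the contravariance at $\fp$ and the duality at $\fl\neq\fp$ could easily produce a sign error; this must be traced carefully through the exact sequence in Section~\ref{sec:KerDefs} to confirm that larger sublattice corresponds to smaller kernel. Once that is secured, the rest of the argument is purely formal.
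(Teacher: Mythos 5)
Your proof is correct and takes essentially the same route as the paper: both rest on the inclusion-reversing correspondence $J\H(\phi)\subseteq I\H(\phi)\iff\phi[I]\subseteq\phi[J]$ (from Proposition~\ref{prop:sublattice} and the preceding discussion of the lattice--isogeny dictionary), combined with the elementary observation that $\phi[I]\subseteq\phi[J]$ is the same as $J\subseteq\Ann_\cE(\phi[I])$. Packaging these into a single ``master identity'' and deriving both implications formally from it is a clean presentation, but the underlying steps coincide with the paper's argument.
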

\begin{proof} 
	Note that by the previous discussion, $J\H(\phi)\subseteq I\H(\phi)$ if and only if $\phi[I]\subseteq \phi[J]$. 
	
	Suppose $I$ is a kernel ideal in the sense of Definition \ref{defKI2} and $\phi[I]\subseteq \phi[J]$. 
	Then 
	\[
	J\subseteq \Ann_\cE\phi[J]  \subseteq \Ann_\cE\phi[I] = I. 
	\]
	Hence $I$ is a kernel ideal in the sense of Definition \ref{defKI3}. 
	
	Conversely, suppose that $I$ is a kernel ideal in the sense of Definition \ref{defKI3}. Denote $J=\Ann_\cE\phi[I]$. 
	We have $I\subseteq J$, and we need to show that this is an equality. 
	For any $\alpha\in J$, $\ker(\alpha)$ contains $\phi[I]$, so $\phi[I]\subseteq \phi[J]$. 
	This implies $J\H(\phi)\subseteq I\H(\phi)$. Hence $J\subseteq I$.  
\end{proof}

The next example shows that in general not every ideal of $\cE$ is a kernel ideal.  

\begin{example}\label{ex:nonkernel}
	Let $q=2$, $\fp=T^4+T+1$, and $n=d=4$. Set $\phi_T =t+t^3\tau^2+\tau^3$. The minimal polynomial 
	of $\pi$ is given by 
	\[
	m(x) = x^3 +Tx^2 +x+\fp.
	\]
	We algorithmically compute, cf.~\cite{GP2}, that an $A$-basis for $\cE$ is given by $e_1$, $e_2$, $e_3$, where
	\[
	e_1=1, \qquad e_2=\pi+1, \qquad e_3=\frac{(\pi+1)^2}{T+1}. 
	\]
	We also compute that 
	\begin{align*}
	e_2e_3 &= e_3e_2 = (T +1)^3 +(T +1)e_3, \\ 
	e_2^2 &= (T + 1)e_3,\\ 
	e_3^2 &= (T +1)^3 +(T +1)^2e_2 +(T +1)e_3.
	\end{align*}
	Let $\fl = T + 1$. We observe that an argument similar to the argument in \cite[Example 4.12]{GP2} 
	implies that $\cE_\fl$ is not Gorenstein. 
	
	Consider the ideal $I = (e_2, e_3)$ in $\cE$. An arbitrary element of $I$ is of the following form:
	\[
	\begin{split}
	& (a_1 + a_2e_2 + a_3e_3)e_2 + (b_1 + b_2e_2 + b_3e_3)e_3 \\
	&=(a_3 +b_2 +b_3)(T +1)^3 +(a_1 +b_3(T +1)^2)e_2 +(b_1 +(a_3 +b_2 +b_3)(T +1))e_3,
	\end{split}
	\]
	where $a_i, b_i\in A$. Hence 
	\[
	I = A(T +1)^3 +Ae_2 +Ae_3.
	\]
	In $\twist{k}$, we have 
	\begin{align*}
	e_2 &=1+\tau^4 \\ 
	e_3 &= t^3 +t^2 +t+(t^3 +t^2 +1)\tau^2 +(t^3 +t)\tau^3 +(t^3 +t^2)\tau^4 +\tau^5.
	\end{align*}
	These polynomials satisfy the equation 
	\[
	w = ue_2 + ve_3,
	\]
	where
	\begin{align*}
	w &\colonequals t^3 +t+1+(t^3 +t^2)\tau +(t+1)\tau^2 +\tau^3, \\ 
	u &\colonequals (t^3 +t^2)^2 +(t^3 +t^2)\tau, \\ 
	v &\colonequals t^3 + t^2.
\end{align*}
	We also have 
	\begin{align*}
	\phi_{(T+1)^2} &= (t^2 +1)+t^3\tau^2 +(t^2 +t+1)\tau^3 +\tau^4 +t\tau^5 +\tau^6 \\ 
	& = (t+(t^2 +1)\tau +(t^2 +t)\tau^2 +\tau^3)w. 
	\end{align*}
	Hence, $(T +1)^2 \in \twist{k}w \subseteq \twist{k} I$. But $I\cap A=(T+1)^3A$, so $(T+1)^2\not\in I$. 
	This proves that $I$ is not a kernel ideal. 
\end{example}


\section{Kernel ideals: Properties}\label{sec:KerProps}
We keep the notation and assumptions of the previous section. 
In particular, $\phi$ is a Drinfeld module over $k$ such that $\cE\colonequals \End_k(\phi)$ is commutative, 
and $D\colonequals \cE\otimes_A F$. 

The next lemma is the analogue of  \cite[Theorem 3.11]{Waterhouse}. 

\begin{lem}\label{lem:linequiv}
	Let $I$ and $J$ be nonzero ideals in $\cE$. 
	\begin{enumerate}
		\item If $I=J u$ for some $u\in D$, then $I\ast\phi\cong J\ast \phi$. 
		\item If $I\ast\phi\cong J\ast \phi$ and $I, J$ are kernel ideals, then $I=J u$ for some $u\in D$. 
	\end{enumerate}
\end{lem}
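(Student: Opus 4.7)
The plan is to translate everything into the dictionary between ideals in $\cE$ and principal left ideals in $\twist{k}$ (as in Section~\ref{sec:KerDefs}), exploiting that $\twist{k}$ is a left PID whose unit group is $k^\times$. Both directions rest on producing an equality of the form $u_I a = c u_J v$ with $c \in k^\times$, $a \in A \setminus \{0\}$, and $v \in \cE$; the constant $c$ will serve as the isomorphism between $I \ast \phi$ and $J \ast \phi$ in one direction, and will already be given in the other.

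For part (1), starting from $u \in D$ with $I = Ju$, I would first clear denominators by writing $u = v a^{-1}$ with $v \in \cE$ and $a \in A \setminus \{0\}$, which is possible because $D = \cE \otimes_A F$. This promotes $I = Ju$ to an integral equality $Ia = Jv$ in $\cE$. Passing to left ideals in $\twist{k}$ and using $\twist{k}(Ia) = \twist{k}I \cdot a = \twist{k}(u_I a)$, together with the analogous identity for $Jv$, I get $\twist{k}(u_I a) = \twist{k}(u_J v)$, whence $u_I a = c u_J v$ for some $c \in k^\times$. Since $\phi_a$ commutes with $\phi_T$ ($A$ is commutative) and $v$ centralizes $\phi(A)$ (by the definition of $\cE$), a short computation then gives $\psi^I_T \cdot (u_I a) = c \psi^J_T c^{-1} \cdot (u_I a)$, and cancellation in the domain $\twist{k}$ yields $\psi^I_T = c \psi^J_T c^{-1}$. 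Hence $c$ is the desired isomorphism from $J \ast \phi$ to $I \ast \phi$.

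For part (2), any isomorphism $c\colon I\ast\phi \to J\ast\phi$ is an element $c \in k^\times$ satisfying $c \psi^I_T c^{-1} = \psi^J_T$. I would set $w := u_J^{-1} c u_I \in k(\tau)$ and unwind the isomorphism relation to obtain $w \phi_T = \phi_T w$, showing $w \in \Cent_{k(\tau)}(\phi(F)) = D$. Clearing denominators $w = v a^{-1}$ as before gives $c u_I a = u_J v$ in $\twist{k}$, hence $\twist{k}(Ia) = \twist{k}(Jv)$. Intersecting both sides with $D$, Definition~\ref{defKI1} applied to the kernel ideals $I$ and $J$ should force $Ia = Jv$, and therefore $I = J \cdot v a^{-1} = J w$ with $w \in D$.

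The main obstacle, really the only step that is not bookkeeping, is an auxiliary observation needed for this last intersection: right multiplication by a non-zero-divisor preserves the kernel ideal property, so that $\twist{k}(Ia) \cap D = Ia$ (and likewise for $Jv$), even though Definition~\ref{defKI1} only concerns $I$ itself. I expect this to fall out of a direct argument: any $x \in \twist{k}(Ia) \cap D$ can be written $x = y a$ for some $y \in \twist{k}I$, and since $a \in D^\times$ one then has $y = x a^{-1} \in D \cap \twist{k}I = I$, so $x \in Ia$.
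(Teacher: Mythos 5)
Your proof is correct and takes essentially the same route as the paper's: pass to left $\twist{k}$-ideals, extract a relation $cu_I = u_J(\cdot)$ with $c \in k^\times$ witnessing the isomorphism, and in part (2) recover $I = Jw$ by intersecting with $D$ and invoking the kernel-ideal property.

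The one stylistic difference is that you clear denominators, writing $u = va^{-1}$ and reducing to integral elements $a \in A$ and $v \in \cE$ before passing to $\twist{k}$-ideals. The paper avoids this by establishing, for arbitrary $u \in D$ and any ideal $J \trianglelefteq \cE$, the identity $\twist{k}Ju \cap D = (\twist{k}J \cap D)\,u$; this is exactly the auxiliary observation you flag as "the only step that is not bookkeeping," proved by the same one-line argument (if $z = xu$ with $x \in \twist{k}J$ and $z \in D$, then $x = zu^{-1} \in D$). Working directly with $u \in D$ makes the chain of equivalences slightly shorter and lets both parts of the lemma fall out of the same displayed equivalence, but nothing in your version fails; the denominator-clearing just adds a step that the paper's more general identity renders unnecessary. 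Your observation that $a, v$ commute with $\phi_T$ so that $\psi^I_T = c\psi^J_T c^{-1}$ follows by cancellation in the domain $\twist{k}$ is sound.
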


\begin{proof}
    (1)	Let $\twist{k}I=\twist{k} u_I$ and $\twist{k}J=\twist{k} u_J$. By definition, $(I\ast \phi)_T=u_I \phi_T u_I^{-1}$ 
	and $(J\ast \phi)_T=u_J \phi_T u_J^{-1}$. 
	We have 
	\begin{align*} 
		I\ast\phi\cong J\ast \phi \quad & \Longleftrightarrow  \quad c u_I \phi_T u_I^{-1}c^{-1}=u_J\phi_Tu_J^{-1} & \text{for some $c\in k^\times$}\\ 
		& \Longleftrightarrow u_J^{-1}cu_I\in D \\ 
		& \Longleftrightarrow cu_I=u_J u  & \text{for some $u\in D$}. 
	\end{align*}
If $I=J u$, then $u_I=u_J u$, so $I\ast\phi\cong J\ast \phi$. 

(2) Now assume that $I\ast\phi\cong J\ast \phi$, or equivalently $cu_I=u_J u$. Then 
\[
\twist{k}cu_I=\twist{k}u_I=\twist{k}I
\]
and 
\[
\twist{k}u_J u=\twist{k}Ju.
\]
Note that 
$
\twist{k}Ju \cap D = (\twist{k}J\cap D)u$, so if $I$ and $J$ are kernel ideals, then 
\[
Ju=(\twist{k}J\cap D) u = \twist{k}Ju \cap D =\twist{k}I\cap \cE=I. 
\]
\end{proof}

Let 
\begin{equation}\label{eq:order}
\cO_I \colonequals \{g\in D\mid Ig\subseteq I\}
\end{equation}
be the (right) order of $I$ in $D$. The next lemma is the analogue of \cite[Proposition 3.9]{Waterhouse}. 

\begin{lem}\label{lem:endker}
Let $I$ be a nonzero ideal in $\cE$ and write $\twist{k}I=\twist{k} u_I$ with $u_I~\in~\twist{k}$.  
	\begin{enumerate}
		\item We have $u_I \cO_I u_I^{-1}\subseteq \End_k(I\ast\phi)$. 
		\item If $I$ is a kernel ideal, then $u_I \cO_I u_I^{-1}= \End_k(I\ast\phi)$. 
	\end{enumerate} 
\end{lem}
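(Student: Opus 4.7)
The plan is to treat the two parts in parallel, with the common ingredient being that $D=\Cent_{k(\tau)}(\phi(F))$, so every $g\in \cO_I\subseteq D$ satisfies $g\phi_T=\phi_T g$ in $k(\tau)$, together with the defining relation $\twist{k}I=\twist{k}u_I$.

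For part (1), I would start with $g\in\cO_I$. The defining property $Ig\subseteq I$ yields
\[
\twist{k}u_I g \;=\; \twist{k}I g \;\subseteq\; \twist{k}I \;=\; \twist{k}u_I,
\]
so by right division in $\twist{k}$ there exists $h\in\twist{k}$ with $u_I g = h u_I$, i.e.\ $h=u_I g u_I^{-1}\in\twist{k}$. A direct computation then shows
\[
h\cdot (I*\phi)_T \;=\; u_I g u_I^{-1}\cdot u_I \phi_T u_I^{-1} \;=\; u_I \phi_T g u_I^{-1} \;=\; (I*\phi)_T\cdot h,
\]
using $g\phi_T=\phi_T g$. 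Since $(I*\phi)_T$ determines $I*\phi$, this places $h$ in $\End_k(I*\phi)$; note that the kernel ideal hypothesis is not used here.

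For part (2), given $\alpha\in\End_k(I*\phi)$, I would set $g\colonequals u_I^{-1}\alpha u_I\in k(\tau)$ and reverse the calculation above to obtain $g\phi_T=\phi_T g$. Because $\phi(F)=\F_q(\phi_T)$, this forces $g\in\Cent_{k(\tau)}(\phi(F))=D$. The remaining task is to verify $Ig\subseteq I$: for any $i\in I\subseteq\twist{k}u_I$, write $i=wu_I$ with $w\in\twist{k}$; then
\[
ig \;=\; w u_I g \;=\; w\alpha u_I \;\in\; \twist{k}u_I \;=\; \twist{k}I,
\]
and since $i,g\in D$ this gives $Ig\subseteq \twist{k}I\cap D$.

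The only substantive step is the closing one: the kernel ideal hypothesis is precisely $\twist{k}I\cap D=I$ (Definition~\ref{defKI1}), which upgrades $Ig\subseteq \twist{k}I\cap D$ to $Ig\subseteq I$. This places $g$ in $\cO_I$ and gives $\alpha=u_I g u_I^{-1}$, completing the reverse inclusion. Everything else is bookkeeping in the division algebra $k(\tau)$; in particular $u_I$ is invertible there since $u_I\neq 0$.
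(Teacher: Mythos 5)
Your proof is correct and follows essentially the same approach as the paper's: part (1) uses that elements of $\cO_I$ commute with $\phi_T$ in $k(\tau)$ plus the containment $\twist{k}I g\subseteq \twist{k}I$, and part (2) reverses the conjugation and uses $\twist{k}I\cap D=I$ to land in $\cO_I$. The only cosmetic difference is that you verify $Ig\subseteq I$ elementwise where the paper manipulates sets; both come to the same thing.
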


\begin{proof} 
(1)	Let $u\in \cO_I$. By definition, $u\in D$, so it commutes with $\phi_T$ in $k(\tau)$. Therefore,  
	\[
	(u_Iuu_I^{-1})(u_I\phi_Tu_I^{-1})= u_Iu\phi_Tu_I^{-1}=u_I\phi_Tu u_I^{-1} =(u_I\phi_Tu_I^{-1}) (u_Iuu_I^{-1}). 
	\]
	On the other hand, because $u\in \cO_I$, we have 
	\[
	\twist{k} u_I u   = \twist{k}I u \subseteq \twist{k}I =\twist{k} u_I. 
	\]
	Thus, $\twist{k} u_I u  u_I^{-1}\subseteq \twist{k}$, so $u_I u  u_I^{-1}\in \twist{k}$. 
It follows that $(u_Iuu_I^{-1})\in \End_k(I\ast \phi)$. Hence $u_I\cO_Iu_I^{-1}\subseteq \End_k(I\ast\phi)$. 
	
(2)	Now let $w\in \End_k(I\ast \phi)$. Then $w\in \twist{k}$ and 
	$w(u_I\phi_T u_I^{-1})w^{-1}=u_I\phi_Tu_I^{-1}$. This implies that $u_I^{-1}w u_I\in D$. We have 
	\[
	\twist{k}I (u_I^{-1}w u_I) = \twist{k} u_I(u_I^{-1}w u_I)=\twist{k} w u_I\subseteq \twist{k}u_I=\twist{k}I
	\]
	Assume $I$ is a kernel ideal. Then 
	$
	\twist{k}I\cap D=I$
	and 
	\[
	(\twist{k}I (u_I^{-1}w u_I))\cap D = (\twist{k}I \cap D) (u_I^{-1}w u_I) = I (u_I^{-1}w u_I),
	\]
	where the first equality follows from the fact that $u_I^{-1}w u_I\in D$. We see that 
	\[
	I (u_I^{-1}w u_I) \subseteq I,
	\]
	so $u_I^{-1}w u_I\in \cO_I$. This proves that $\End_k(I\ast \phi)\subseteq u_I\cO_I u_I^{-1}$, which combined with the 
	reverse inclusion proved earlier implies that $\End_k(I\ast \phi)= u_I\cO_I u_I^{-1}$. 
\end{proof}

The next lemma is the analogue of \cite[Theorem 3.15]{Waterhouse}. 
\begin{lem}\label{lem:Emax}
	Assume $\cE$ is the maximal $A$-order in $D$. Then every nonzero ideal of $\cE$ is a kernel ideal. 
\end{lem}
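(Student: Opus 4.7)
The plan is to verify Definition~\ref{defKI3}: given any nonzero ideal $J\trianglelefteq \cE$ with $J\H(\phi)\subseteq I\H(\phi)$, show $J\subseteq I$. By Lemmas~\ref{lem:KI12} and~\ref{lem:KI23} this is enough to establish the lemma in all three senses.

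The first step is to exploit the maximality hypothesis. Since $\cE$ is commutative, $D=\cE\otimes_A F$ is a field, and by Corollary~\ref{cor:locmax} we have $[D:F]=r$, so $\cE$ is an $A$-order of rank~$r$ in~$D$; the assumption that $\cE$ is the maximal such order is then equivalent to $\cE$ being a Dedekind domain. Consequently every nonzero ideal of~$\cE$ is invertible, and for each prime $\fl\trianglelefteq A$ the completion $\cE_\fl:=\cE\otimes_A A_\fl$ is a finite product of complete DVRs indexed by the primes of~$\cE$ lying above~$\fl$.

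The central step is the claim that, for every prime $\fl$ of~$A$, the module $H_\fl(\phi)$ is an invertible $\cE_\fl$-module (and, for $\fl=\fp$, an invertible module over the ring $\cE_\fp\otimes_{A_\fp}\cO_k$). For $\fl\neq \fp$ this follows by a rank count: the Tate module $T_\fl(\phi)$ is free of rank $r$ over $A_\fl$ with a faithful action of $\cE$, so $V_\fl(\phi)$ is a faithful module of $F_\fl$-dimension $r=[D_\fl:F_\fl]$ over the commutative semisimple algebra $D_\fl=D\otimes_F F_\fl$, hence free of rank one over $D_\fl$; dualizing, $H_\fl(\phi)\otimes_{A_\fl}F_\fl$ is rank one over $D_\fl$, and since $\cE_\fl$ is a product of DVRs any $\cE_\fl$-lattice therein is automatically an invertible fractional ideal. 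For $\fl=\fp$, the analogous argument over the commutative ring $\cE_\fp\otimes_{A_\fp}\cO_k$ applies to the Dieudonn\'e module of \cite[\S2.5]{LaumonCDV} (whose $\cO_k$-rank $r$ matches the $\cO_k$-rank of $\cE_\fp\otimes_{A_\fp}\cO_k$), and the invertibility then descends to statements about ideals of $\cE_\fp$ via the faithful flatness of $\cO_k/A_\fp$.

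Granted the local invertibility, the hypothesis $J\H(\phi)\subseteq I\H(\phi)$ implies $J_\fl H_\fl(\phi)\subseteq I_\fl H_\fl(\phi)$ for every~$\fl$; cancelling the invertible module yields $J_\fl\subseteq I_\fl$. Since $\cE$ is Dedekind, ideals are determined by their local data, so $J\subseteq I$. The main obstacle in this plan is the verification of the local invertibility, especially at the characteristic prime $\fp$, where the Dieudonn\'e module carries both a Frobenius-semilinear operator and a natural $\cO_k$-action that must be shown compatible with the $\cE_\fp$-action before the dimension count yields the desired rank-one conclusion.
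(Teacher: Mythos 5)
Your route is genuinely different from the paper's proof of this lemma. The paper's argument works directly with Definition~\ref{defKI1} and is purely algebraic: it first shows $\twist{k}(\alpha\cE)\cap D = \alpha\cE$ for principal ideals by cancelling $\alpha$, and then, for a general nonzero ideal $I$, uses the maximality of $\cE$ to produce $J$ with $IJ$ principal; the inclusion $(\twist{k}I\cap D)J\subseteq \twist{k}IJ\cap D = IJ$ together with invertibility of $J$ in $D$ then gives $\twist{k}I\cap D\subseteq I$. Your plan instead verifies Definition~\ref{defKI3} by arguing that the local modules $H_\fl(\phi)$ are invertible over $\cE_\fl$. That is the method the paper uses for the more general Proposition~\ref{prop:Gorker} (Gorenstein $\Rightarrow$ every ideal kernel), of which Lemma~\ref{lem:Emax} is a special case, so the route is not wrong --- but it is not the proof given for this particular lemma, and it imports the full Dieudonn\'e-module machinery where a short, self-contained argument is available.

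You flag the obstacle at $\fl=\fp$ yourself, and it is a real gap. The dimension count alone does not give that $H_\fp(\phi)$ is free of rank one over $\cE_\fp\otimes_{A_\fp}\cO_k$. That ring is a product of complete DVRs, and for the dimension count to force rank one you must also know that no idempotent factor annihilates $H_\fp(\phi)\otimes F_\fp$, i.e., that the $\cE_\fp\otimes_{A_\fp}\cO_k$-action is faithful. Faithfulness of the $\cE_\fp$-action and of the $\cO_k$-action separately does not imply this; one must use the compatibility with the $\tau^{\deg\fp}$-semilinear map $f_{\phi,\fp}$ to rule out the degenerate case, and that is precisely where the work lies. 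The paper's Proposition~\ref{prop:Gorker} avoids the issue by passing through the connected--\'etale decomposition $H_\fp(\phi)=H_\fp^c(\phi)\oplus H^{\text{{\'e}t}}_{\fp}(\phi)$ matched with $\cE_\fp=\cE_{\tfp}\oplus\cE'_\fp$: the \'etale piece reduces to the $\fp$-adic Tate module, where the same argument as for $\fl\neq\fp$ applies, while the connected piece is free over $\cE_{\tfp}$ simply because $\cE_{\tfp}$ is a DVR and $H_\fp^c(\phi)$ is torsion-free --- no faithfulness or rank-one claim is needed there (nor, in general, true: $H_\fp^c(\phi)$ typically has $\cE_{\tfp}$-rank $n/d>1$, which still suffices to cancel ideals). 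If you want to keep your approach, either supply the missing faithfulness argument using $f_{\phi,\fp}$, or adopt the connected--\'etale decomposition. One small point: $[D:F]=r$ follows from the fact that $\cE$ is commutative together with \cite[Theorem 4.1.5]{DM}, which is used in the proof of Corollary~\ref{cor:locmax} but is not the statement of that corollary.
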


\begin{proof}
	First, consider a nonzero principal ideal $\alpha\cE$. We have $\twist{k}\alpha\cE=\twist{k}\alpha$. Suppose $u=g\alpha\in \twist{k}\alpha$  
	and $u\in D$. Then $g=u\alpha^{-1}\in D$ and $g\in \twist{k}$, so $g\in \cE$. Therefore, $u\in \alpha\cE$. This 
	implies that 
	\[
	\alpha \cE\subseteq \twist{k}\alpha \cap D\subseteq \alpha\cE, 
	\]
	so $(\twist{k}(\alpha\cE))\cap D=\alpha\cE$, i.e., $\alpha\cE$ is a kernel ideal. 
	
	Now let $I\trianglelefteq \cE$ be an arbitrary nonzero ideal. Since $\cE$ is maximal, there is an ideal $J\trianglelefteq \cE$ such that 
	$IJ=\alpha\cE$ is principal. We have 
	\[
	(\twist{k} I\cap D)J\subseteq \twist{k} IJ\cap D = IJ,
	\]
	where the last equality follows from the earlier considered case of principal ideals. Now $I'\colonequals \twist{k} I\cap D$ 
	is an ideal of $\cE$, and we have $I'J\subseteq IJ$. Multiplying both sides by $J^{-1}\subseteq D$, we get $I'\subseteq I$. 
	Since $I\subseteq I'$, we have $I'=I$, so $I$ is a kernel ideal. 
\end{proof}

\begin{defn}
	We say that $\cE$ is Gorenstein if $\cE_\fl\colonequals \cE\otimes_A A_\fl$ 
	is a Gorenstein ring for all primes $\fl\trianglelefteq A$, i.e., $\Hom_{A_\fl}(\cE_\fl, A_\fl)$ 
	is a free $A_\fl$-module of rank $1$; cf. \cite{Bass}.  
\end{defn}

Note that the maximal $A$-order in $D$ is Gorenstein, so the next proposition implies Lemma \ref{lem:Emax}. 

\begin{prop}\label{prop:Gorker}
If $\cE$ is Gorenstein then every nonzero ideal of $\cE$ is a kernel ideal. 
\end{prop}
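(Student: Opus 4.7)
The plan is to verify Definition \ref{defKI1}: for every nonzero ideal $I \trianglelefteq \cE$, we must show $\twist{k}I \cap D = I$. The inclusion $I \subseteq \twist{k}I \cap D$ is automatic, and by \eqref{eq:kernelDE} we have $\twist{k}I \cap D = \twist{k}I \cap \cE$, so the task reduces to showing $\cE \cap \twist{k}I \subseteq I$.

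The key input from commutative algebra is the following reflexivity property of Gorenstein orders: for every nonzero ideal $I$ of $\cE$,
\[
(\cE : (\cE : I)) = I, \qquad \text{where } (\cE : M) \colonequals \{x \in D \mid xM \subseteq \cE\}.
\]
This is standard for a one-dimensional local Gorenstein Noetherian domain \cite{Bass}: such a ring is (isomorphic to) its own canonical module, so the functor $\Hom_R(-, R)$ gives a duality on nonzero fractional ideals, yielding $I = (R : (R : I))$. Applied to each localization $\cE_\fm$ as $\fm$ ranges over the maximal ideals of $\cE$, and using that $(\cE : -)$ commutes with localization, this globalizes to the stated reflexivity for $\cE$.

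With reflexivity in hand, the argument is direct. Take $\alpha \in \cE \cap \twist{k}I$ and write $\alpha = \sum_i f_i \beta_i$ for finitely many $f_i \in \twist{k}$ and $\beta_i \in I$. For any $x \in (\cE : I)$, each $\beta_i x$ lies in $\cE \subseteq \twist{k}$, so
\[
\alpha x = \sum_i f_i (\beta_i x) \in \twist{k}.
\]
Since $\alpha, x \in D$, we also have $\alpha x \in D$; hence $\alpha x \in \twist{k} \cap D = \cE$. As $x \in (\cE : I)$ was arbitrary, $\alpha \cdot (\cE : I) \subseteq \cE$, i.e., $\alpha \in (\cE : (\cE : I)) = I$, completing the proof.

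The main obstacle is the global reflexivity statement $(\cE : (\cE : I)) = I$. The Gorenstein condition in the paper is phrased via the local rings $\cE_\fl$ in the Bass style, so one must relate this to the reflexivity characterization at each local ring and then globalize by localization. Once this ingredient is granted, the remaining manipulations inside $\twist{k}$ and $D$ are purely formal, exploiting only that elements of $\twist{k}I$ are \emph{a priori} elements of $\twist{k}$, while multiplication by $(\cE : I)$ provides the required control inside the commutative field $D$.
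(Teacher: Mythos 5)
Your proof is correct, and it takes a genuinely different route from the paper's. The paper works with Definition~\ref{defKI3}: it shows that when $\cE$ is Gorenstein, the modules $H_\fl(\phi)$ for $\fl\neq\fp$ are free of rank one over $\cE_\fl$ (via \cite[Theorem~4.9]{GP2}), and handles $\fp$ by splitting the Dieudonn\'e module into its connected part (where $\cE_{\tfp}$ is maximal, hence a DVR) and its \'etale part (where the same Gorenstein argument applies); the conclusion $J\H(\phi)\subseteq I\H(\phi)\Rightarrow J\subseteq I$ then follows from freeness at each place. You instead work directly with Definition~\ref{defKI1} and reduce everything to the reflexivity characterization $(\cE:(\cE:I))=I$ of Gorenstein orders, together with a short formal computation inside $k\{\tau\}$: for $\alpha\in\twist{k}I\cap\cE$ and $x\in(\cE:I)$, writing $\alpha=\sum_i f_i\beta_i$ gives $\alpha x=\sum_i f_i(\beta_i x)\in\twist{k}$ because $\beta_i x=x\beta_i\in\cE$ (commutativity of $D$ is used here), and $\alpha x\in D$ trivially, so $\alpha x\in\twist{k}\cap D=\cE$ and hence $\alpha\in(\cE:(\cE:I))=I$. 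Your approach bypasses entirely the Tate and Dieudonn\'e module machinery, the equivalence of the three definitions, and the delicate splitting at $\fp$; it is more elementary and more self-contained. The paper's approach, on the other hand, exposes the lattice-theoretic content of kernel ideals and feeds directly into Theorem~\ref{thm:idealaction}(3), whose proof reuses the same freeness statements, so the extra work there is not wasted. One small point: the reflexivity statement needs to be localized at the maximal ideals of $\cE$ (not just at the primes $\fl$ of $A$, where $\cE_\fl$ is only semilocal); you note this, and since $\cE_\fl$ Gorenstein is equivalent to $\cE_\fm$ Gorenstein for all $\fm\mid\fl$, and colon ideals commute with localization for finitely generated modules, the globalization goes through.
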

\begin{proof} Let $I$ and $J$ be nonzero ideals of $\cE$ such that $J H_\fl(\phi) \subseteq I H_\fl(\phi)$. 
	Assume $\fl\neq \fp$. Because $\cE_\fl$ is Gorenstein, $T_\fl(\phi)$ is a free $\cE_\fl$-module 
	of rank $1$; cf. \cite[Theorem 4.9]{GP2}. But then, again because $\cE_\fl$ is Gorenstein,  
	$H_\fl(\phi)=\Hom_{A_\fl}(T_\fl(\phi), A_\fl)$ is also a free $\cE_\fl$-module 
	of rank $1$; cf. \cite[Def. 4.8]{GP2}. Hence, the inclusion $J H_\fl(\phi)\subseteq I H_\fl(\phi)$ implies that $J_\fl\subseteq I_\fl$, 
	where $J_\fl\colonequals J\otimes_A A_\fl$ and $I_\fl\colonequals I\otimes_A A_\fl$.  

	At $\fp$ we consider the decomposition
	\begin{equation}\label{eq:Hpdecomp}
	 H_{\fp}(\phi) = H^c_{\fp}(\phi) \oplus H^{\text{{\'e}t}}_{\fp}(\phi)
	\end{equation}
	of the Dieudonn{\'e} module into its connected component $H^c_{\fp}(\phi)$ and maximal {\'e}tale quotient $H^{\text{{\'e}t}}_{\fp}(\phi)$. 
	Let 
	\begin{equation}\label{eqSplittingD}
	D_\fp\colonequals D\otimes_F F_\fp = \bigoplus_{\nu \vert \fp} D_{\nu},
	\end{equation}
	where the sum is over the places of $\tF=D$ lying over $\fp$ and $D_{\nu}$ is the completion of~$D$ 
	at $\nu$. There is a natural isomorphism (cf. \cite[Theorem 2.5.6]{LaumonCDV})
	\[
	\cE_{\fp} \simeq \mathrm{End}(H_{\fp}(\phi)),
	\]
	where $\mathrm{End}(H_{\fp}(\phi))$ denotes the ring of endomorphisms of $H_{\fp}(\phi)$ compatible 
	with the action of the Frobenius $f_{\phi, \fp}$. By \cite[Corollary 2.5.8]{LaumonCDV}, 
	the splitting \eqref{eqSplittingD} induces a compatible splitting 
	$\cE_{\fp} = \mathcal{E}_{\tilde{\fp}} \oplus \cE'_{\fp}$ such that 
	\begin{align}
	\mathcal{E}_{\tilde{\fp}} &\simeq \mathrm{End}(H_{\fp}^c(\phi)), \label{eq:Tateatp1}\\ 
	\mathcal{E}'_{\fp} &\simeq \mathrm{End}(H^{\text{{\'e}t}}_{\fp}(\phi)) \simeq \mathrm{End}_{A_{\fp}[G_k]}(T_{\fp}(\phi)). \label{eq:Tateatp2}
	\end{align}
	Here $\cE_{\tilde{\fp}}$ is the completion of $\cE$ in $B_{\tilde{\fp}}$, and $\mathcal{E}'_{\fp} = \oplus_{j} \mathcal{E}_{j}$ is a direct sum of finitely many local rings corresponding to places $\nu \neq \tilde{\fp}$ lying over $\fp$, and $T_{\fp}(\phi)=\varprojlim \phi[\fp^n](\bar{k})$ denotes the $\fp$-adic Tate module of $\phi$.
	By~\cite[Corollary, p.~164]{JKYu} we have that $\mathcal{E}_{\tilde{\fp}} = B_{\tilde{\fp}}$ is maximal, hence a DVR, which implies that $H_{\fp}^c(\phi)$ is a free $\mathcal{E}_{\tilde{\fp}}$-module. 
	Further, since $\cE_\fp'$ is Gorenstein by assumption, 
	one can apply the argument in the proof of \cite[Theorem 4.9]{GP2} to \eqref{eq:Tateatp2} to 
	conclude that $H^{\text{{\'e}t}}_{\fp}(\phi)$ is a free $\mathcal{E}'_{\fp}$-module. 
	Combining these statements yields that $J H_{\fp}(\phi) \subseteq I H_{\fp}(\phi)$ also implies that $J_\fp\subseteq I_\fp$.
	
	Finally, consider $I_\fl$ as an $A_\fl$-submodule of $D\otimes_F F_\fl$ for any place $\fl$ including $\fp$. Then 
	\[
	J=\bigcap_\fl (D\cap J_\fl) \subseteq \bigcap_\fl (D\cap I_\fl) =I.  
	\]
	Hence $I$ is a kernel ideal by Definition \ref{defKI3}. 
\end{proof}
 

\section{Endomorphism rings and ideal actions}\label{sec:ideals}

We keep the notation and assumptions of the previous section. In particular, 
$\phi$ is a Drinfeld module over $k$ of rank $r$ such that $\cE=\End_k(\phi)$ is commutative. 

Given an $A$-order $R$ in $\tF=D=\cE\otimes_A F$ and a prime $\fl\lhd A$, 
we denote $R_\fl=R\otimes_A A_\fl$. Also, given a prime $\nu$ of $B$, we denote by $B_\nu$ 
the completion of $B$ at $\nu$ and by $R_\nu$ the completion of $R$ in $B_\nu$. 

The following result is modeled on \cite[Porism 4.3]{Waterhouse}.  

\begin{prop}\label{prop:porism} 
Let $R$ be an $A$-order in $D$ containing $\pi$.   
Then there is a Drinfeld module~$\psi$ in the isogeny class of $\phi$ such that $\End_k(\psi)= R$ 
if and only if $R$ is locally maximal at $\pi$.
\end{prop}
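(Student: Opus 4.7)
The "only if" direction is immediate from the remark following Definition \ref{def:locmax}: any commutative endomorphism ring arising from a Drinfeld module in this isogeny class is automatically locally maximal at $\pi$ by \cite[p.~164]{JKYu} and \cite[p.~514]{Angles}.

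For the "if" direction, the plan is to realize $\psi$ via \cite[Lemma 2.6.2]{LaumonCDV} by specifying a suitable sublattice $M=\prod_{\fl}M_\fl\subseteq \H(\phi)$. Commutativity of $\cE$ gives $[\tF:F]=r$, so at every prime $\fl$ the rational space $V_\fl(\phi)\colonequals H_\fl(\phi)\otimes_{A_\fl}F_\fl$ is free of rank one over $D_\fl$. Since $R$ and $\cE$ are $A$-orders in the same algebra $D$, we have $R_\fl=\cE_\fl$ at all but finitely many primes; at such primes I would set $M_\fl=H_\fl(\phi)$. At each of the finitely many remaining primes $\fl\neq \fp$, I would identify $V_\fl(\phi)$ with $D_\fl$ via a $D_\fl$-linear isomorphism and take $M_\fl$ to be a suitable scaled copy of $R_\fl$ inside $H_\fl(\phi)$; since the $D_\fl$-module endomorphisms of $R_\fl$ acting by left multiplication are exactly $R_\fl$ itself, the stabilizer of $M_\fl$ in $D_\fl$ is $R_\fl$.

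The delicate construction is at $\fp$. I would invoke the canonical splittings $H_\fp(\phi) = H^c_\fp(\phi)\oplus H^{\et}_\fp(\phi)$ and $\cE_\fp=\cE_{\tfp}\oplus\cE'_\fp$ of \cite[Corollary 2.5.8]{LaumonCDV}, together with $\cE_{\tfp}=B_{\tfp}$. The local-maximality hypothesis on $R$ translates to $R_{\tfp}=B_{\tfp}=\cE_{\tfp}$, so on the connected factor I keep $M^c_\fp=H^c_\fp(\phi)$ unchanged, while on the \'etale factor I take $M^{\et}_\fp$ to be a scaled copy of $R'_\fp$ inside $H^{\et}_\fp(\phi)$. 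The Dieudonn\'e conditions \eqref{eq:DieudModCondition} on $M_\fp$ then follow from those for $H_\fp(\phi)$: the one-dimensional cokernel $H^c_\fp(\phi)/f_{\phi,\fp}H^c_\fp(\phi)$ lives in the unchanged component, and $\pi\in R$ guarantees that $f_{\phi,\fp}$ preserves $M^{\et}_\fp$.

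The main obstacle I expect is verifying that $\End_k(\psi)$ equals $R$ exactly, rather than some overorder. This follows from the identification $\End_k(\psi)\otimes_A A_\fp\simeq\End(H_\fp(\psi))$ of \cite[Theorem 2.5.6]{LaumonCDV} and its analog at primes $\fl\neq\fp$, combined with the local-global identity $R=D\cap\prod_\fl R_\fl$ for $A$-orders: by construction $R_\fl$ is the stabilizer of $M_\fl$ at every prime, so $\End_k(\psi)_\fl=R_\fl$ globally and hence $\End_k(\psi)=R$. This is the function-field counterpart of \cite[Porism 4.3]{Waterhouse}.
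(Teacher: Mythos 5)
Your proposal is correct and follows essentially the same approach as the paper's proof: the same local-global construction, identifying $V_\fl(\phi)$ as a free rank-one $D_\fl$-module and replacing $H_\fl(\phi)$ by a scaled copy of (the dual of) $R_\fl$ at the finitely many primes $\fl \neq \fp$ where $R_\fl \neq \cE_\fl$, and at $\fp$ the same decomposition of the Dieudonn\'e module into connected and \'etale pieces with the connected part left unchanged because local maximality forces $R_{\tfp} = B_{\tfp} = \cE_{\tfp}$. Your explicit verification of the Dieudonn\'e conditions \eqref{eq:DieudModCondition} is a small addition that the paper leaves implicit in its appeal to \cite[Lemma 2.6.2]{LaumonCDV}.
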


\begin{proof} This is proved in \cite[Theorem 1.5]{Assong2}. We present a slightly 
different argument.

If $R$ is the endomorphism ring of a Drinfeld module isogenous to $\phi$, then 
$R$ contains~$\pi$ and is locally maximal at $\pi$ by \cite[Corollary, p.~164]{JKYu}. 

Conversely, assume $R$ is locally maximal at $\pi$. 
It is enough to show that there is a 
Drinfeld module~$\psi$ in the isogeny class of $\phi$ such that $\End_k(\psi)_\fl= R_\fl$ 
for all the primes $\fl$ of $A$.

Pick any Drinfeld module $\phi_0$ in the isogeny class. For any $\fl \neq \fp$, it follows from our assumptions that the rational Tate module $V_{\mathfrak{l}}(\phi_0)$ is free of rank $1$ over $D_{\mathfrak{l}}$. It therefore contains lattices $L$ with any order $\mathcal{O}_L = \{ x \in D : xL \subseteq L \}$ (cf.~\eqref{eq:order}), and, identifying $V_{\mathfrak{l}}(\phi_0) \simeq D_{\mathfrak{l}}$, we see that such a lattice is Galois invariant if and only if its order contains $\pi$.

For any prime $\mathfrak{l} \neq \mathfrak{p}$, we view both $\End_k(\phi_0)_{\mathfrak{l}} = \End_k(\phi_0) \otimes A_{\mathfrak{l}} \simeq \End_{A_\fl[G_k]}(T_{\mathfrak{l}}(\phi_0))$ and~$R_{\mathfrak{l}}$ as lattices in $V_{\mathfrak{l}}(\phi_0)$. Hence, both $\End_k(\phi_0)$ and $R$ are maximal at all but finitely many primes $\mathfrak{l}$. In particular, there exist only finitely many primes, $\mathfrak{l}_1, \ldots, \mathfrak{l}_n$ say, at which $\End_k(\phi_0)_{\mathfrak{l}} \neq R_{\mathfrak{l}}$. 

The lattice $R_{\mathfrak{l}_1}$ has order $\{ x \in D: x R_{\mathfrak{l}_1} \subseteq R_{\mathfrak{l}_1} \}=R_{\mathfrak{l}_1}$, and so does its dual $R_{\mathfrak{l}_1}^* \simeq R_{\mathfrak{l}_1}$. As in \eqref{eq:Hl}, let $H_{\mathfrak{l}_1}(\phi_0)$ denote the dual of $T_{\mathfrak{l}_1}(\phi_0)$ and consider the intersection $H_{\mathfrak{l}_1}(\phi_0) \cap R^*_{\mathfrak{l}_1}$. This is an order contained in $R^*_{\mathfrak{l}_1}$ and we consider the index 
$\chi:= \chi(R^*_{\mathfrak{l}_1}/(H_{\mathfrak{l}_1}(\phi_0) \cap R^*_{\mathfrak{l}_1}))$, which is a product of non-zero $A$-ideals.
We have 
\[\chi \cdot R^*_{\mathfrak{l}_1} \subseteq H_{\mathfrak{l}_1}(\phi_0) \cap R^*_{\mathfrak{l}_1} \subseteq H_{\mathfrak{l}_1}(\phi_0)
\]
by definition.
So we have obtained an integral lattice $L_{\mathfrak{l}_1} := \chi \cdot R^*_{\mathfrak{l}_1}$ inside $H_{\mathfrak{l}_1}(\phi_0)$, or equivalently, a lattice in $V_{\mathfrak{l}_1}(\phi_0)$ containing $T_{\mathfrak{l}_1}(\phi_0)$, with order 
\[
\{ x \in D: x \chi R^*_{\mathfrak{l}_1} \subseteq \chi R^*_{\mathfrak{l}_1} \} = \{ x \in D: x R^*_{\mathfrak{l}_1} \subseteq R^*_{\mathfrak{l}_1} \} = R_{\mathfrak{l}}.
\] 
Similar constructions yield sublattices $L_{\mathfrak{l}_i}$ of $H_{\mathfrak{l}_i}(\phi_0)$ for all $i=2, \ldots, n$. At all other~$\mathfrak{l} \neq \fp$ we set $L_{\mathfrak{l}} = H_{\mathfrak{l}}(\phi_0)$.

At $\fp$, write $D_{\fp} = \oplus_{\nu \vert \fp} D_{\nu} = D_{\tilde{\fp}} \oplus \left( \oplus_{\nu \neq \tilde{\fp}} D_{\nu} \right) =: D_{\tilde{\fp}} \oplus D'_{\fp}$, cf. \eqref{eqSplittingD}.
In this case, we have that the rational Dieudonn{\'e} module $H^{\text{{\'e}t}}_{\fp}(\phi_0) \otimes F_{\fp} = \oplus_{\nu \neq \tilde{\fp}} (H_{\fp}(\phi_0) \otimes F_{\fp})_{\nu}$ is free over $D'_{\fp}$, where each summand $(H_{\fp}(\phi_0) \otimes F_{\fp})_{\nu}$ is free over $D_{\nu}$, and therefore contains lattices with any order. Comparing $\mathrm{End}(\phi_0)_{\nu}$ and $R_{\nu}$ at each $\nu \neq \tilde{\fp}$ over $\fp$ as lattices in $D_{\nu}$, and adjusting the former if necessary via an analogous procedure to that in the previous paragraph, yields a sublattice $\oplus_{\nu \neq \tilde{\fp}} L_{\nu}$ of $H^{\text{{\'e}t}}_{\fp}(\phi_0)$. At $\tilde{\fp}$, we set $L_{\tilde{\fp}} = H^c_{\fp}(\phi_0)$.

By the dictionary between sublattices of $\mathbb{H}(\phi_0) = \prod_{\fl \trianglelefteq A} H_{\fl}(\phi_0)$ and isogenies, the quotient of $\prod_{\mathfrak{l} \neq \fp} H_{\mathfrak{l}}(\phi_0) \times H_{\mathfrak{p}}(\phi_0)$ by $\prod_{\mathfrak{l} \neq \fp} L_{\mathfrak{l}} \times \prod_{\nu \vert \fp} L_{\nu}$ yields a finite $A$-invariant subgroup~$G$; cf. \cite[Section 2.6]{LaumonCDV}. The quotient $\phi_0 / G$ in turn yields a Drinfeld module $\psi$ isogenous to $\phi_0$, for which $\End_k(\psi)_{\mathfrak{l}} = R_{\mathfrak{l}}$ at all places $\mathfrak{l} \neq \mathfrak{p}$ of $A$, and  $\End_k(\psi)_{\nu} = R_{\nu}$ at all primes $\nu \vert \fp$ of~$\tilde{F}$ with $\nu \neq \tilde{\fp}$. 
Finally, by \cite[Corollary, p. 164]{JKYu}, $\End_k(\psi)$ is locally maximal at $\pi$, so 
we also have $\End_k(\psi)_{\tfp}=R_{\tfp}$. 
\end{proof}

\begin{cor}\label{lem:allEnd}
The ring $A[\pi]$ is the endomorphism ring of a Drinfeld module isogenous to~$\phi$ if and only if either $\phi$ is ordinary or $k = \mathbb{F}_{\mathfrak{p}}$. 
\end{cor}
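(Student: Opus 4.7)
The plan is simply to observe that Corollary \ref{lem:allEnd} follows by combining the two main results established earlier in the paper, namely Proposition \ref{prop:porism} and Corollary \ref{cor:locmax}. These have already done all the work; the corollary is essentially Theorem A in its final form.

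First I would verify that $A[\pi]$ qualifies as an input to Proposition \ref{prop:porism}, i.e., that it is an $A$-order in $D$ containing $\pi$. The element $\pi\in \cE$ is integral over $A$ (its minimal polynomial $m(x)$ is monic with coefficients in $A$), so $A[\pi]$ is a finitely generated, torsion-free $A$-module of the correct rank. Under the standing assumption of Section~\ref{sec:ideals} that $\cE=\End_k(\phi)$ is commutative, we have $D=\tilde{F}=F(\pi)$ by~\cite[Theorem 4.1.5]{DM}, so $A[\pi]\subseteq D$ and $A[\pi]\otimes_A F = D$; hence $A[\pi]$ is an $A$-order in $D$. It obviously contains $\pi$.

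Next, I invoke Proposition \ref{prop:porism} with $R=A[\pi]$ to conclude that $A[\pi]$ is the endomorphism ring of some Drinfeld module in the isogeny class of $\phi$ if and only if $A[\pi]$ is locally maximal at $\pi$. Then Corollary \ref{cor:locmax} precisely identifies when this local maximality holds, under the commutativity hypothesis on $\cE$: namely, if and only if either $\phi$ is ordinary or $k=\F_\fp$. Chaining the two equivalences gives the corollary.

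I do not foresee any real obstacle, since the heavy lifting has already been carried out in Section~\ref{sec:locmax} (for the local-maximality criterion) and in the construction of the preceding proposition (for producing a Drinfeld module realizing a prescribed order as its endomorphism ring). The only small care needed is making sure that $A[\pi]$ really is a valid choice of $R$ in Proposition \ref{prop:porism}, which as noted above is immediate. The statement is then just the specialization of that equivalence to $R=A[\pi]$.
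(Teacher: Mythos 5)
Your proposal is correct and follows exactly the same route as the paper: apply Proposition~\ref{prop:porism} with $R=A[\pi]$ to reduce to local maximality at $\pi$, then invoke Corollary~\ref{cor:locmax}. The extra remark verifying that $A[\pi]$ is an $A$-order in $D$ containing $\pi$ is a harmless addition that the paper leaves implicit.
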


\begin{proof}
By Proposition~\ref{prop:porism}, 
$A[\pi]$ is the endomorphism ring of a Drinfeld module in the isogeny class of $\phi$ if and only if 
$A[\pi]$ is locally maximal at $\pi$. 
On the other hand, Corollary~\ref{cor:locmax} states that $A[\pi]$ is locally maximal at $\pi$ if and only if either $\phi$ is ordinary or $k = \F_\fp$. 
\end{proof}

We saw in Section~\ref{sec:KerDefs} that, given a Drinfeld module $\phi$ over $k$ and an ideal $I \trianglelefteq \mathcal{E} = \mathrm{End}_k(\phi)$, we can construct an isogenous Drinfeld module $\psi = I * \phi$, which is determined by $\psi_T = u_I \phi_T u_I^{-1}$ and which satisfies $\mathrm{End}_k(\psi) \supseteq u_I \mathcal{O}_I u_I^{-1} \simeq \mathcal{O}_I \supseteq \mathcal{E}$ by Lemma~\ref{lem:endker}.(2).

\begin{thm}\label{thm:idealaction}
Consider the isogeny class of a Drinfeld module $\phi$ over $k$ with commutative endomorphism algebra.
\begin{enumerate}
    \item The map $I \mapsto I * \phi$ defines an action of the monoid of fractional ideals of $\mathcal{E}$ up to linear equivalence on the set of isomorphism classes of Drinfeld modules in the isogeny class of $\phi$ whose endomorphism ring is the order of an $\cE$-ideal (and hence an overorder of $\mathcal{E}$). 
    \item Upon restricting to kernel ideals, the action is free.
    \item If $\cE$ is Gorenstein, then the action is also transitive on the set of all Drinfeld modules whose endomorphism ring is the order of an $\mathcal{E}$-ideal. In other words, if $\mathcal{E}$ is Gorenstein, then every submodule $M$ of $\H(\phi)$ is of the form $I \H(\phi)$ for some nonzero ideal $I\trianglelefteq \cE$. 
\end{enumerate}
\end{thm}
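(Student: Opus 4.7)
I would proceed part by part, in increasing difficulty.

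For (1), the assignment $I \mapsto [I*\phi]$ descends to linear equivalence classes by Lemma~\ref{lem:linequiv}(1). The monoid law $(IJ)*\phi \cong I*(J*\phi)$ is then an instance of the sublattice identity $(IJ)\H(\phi) = I\cdot(J\H(\phi))$, translated back via Proposition~\ref{prop:sublattice} and the bijection of \cite[Lemma~2.6.2]{LaumonCDV}; the axiom $\cE*\phi \cong \phi$ is immediate. To verify the target set, replace $I$ by the kernel ideal $\Ann_\cE(\phi[I])$ (which yields the same $I*\phi$, since $\phi[I]$ is unchanged) and apply Lemma~\ref{lem:endker}(2) to identify $\End_k(I*\phi) \cong \cO_I$, the order of the $\cE$-ideal~$I$.

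For (2), if $I$ and $J$ are kernel ideals with $I*\phi \cong J*\phi$, then Lemma~\ref{lem:linequiv}(2) produces $u \in D$ with $I = Ju$, so $I \equiv J$; this is exactly freeness.

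For (3), the substantive step, start with $\psi$ in the isogeny class whose endomorphism ring $\End_k(\psi) = \cO_K$ is the order of some $\cE$-ideal $K$ (so in particular $\cE \subseteq \cO_K$). Pick any isogeny $u\colon \phi \to \psi$ and let $M = \prod_\fl M_\fl \subseteq \H(\phi)$ be the corresponding sublattice from \cite[Lemma~2.6.2]{LaumonCDV}. Since $\cE \subseteq \End_k(\psi)$ in $D$, for each $\alpha \in \cE$ the image $\phi(\alpha)$ stabilizes $u_\fl^{-1}T_\fl(\psi)$ (because $u_\fl\phi(\alpha) = \psi(\alpha) u_\fl$ and $\psi(\alpha) \in \End_k(\psi)$), so dually each $M_\fl$ is $\cE_\fl$-stable. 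By the Gorenstein hypothesis, the argument from the proof of Proposition~\ref{prop:Gorker} shows that $H_\fl(\phi)$ is free of rank one over $\cE_\fl$ for every $\fl \neq \fp$; at $\fp$, the decomposition \eqref{eq:Hpdecomp} together with the splittings \eqref{eq:Tateatp1}--\eqref{eq:Tateatp2} exhibits $H_\fp(\phi)$ as a direct sum of two rank-one free modules over $\cE_{\tfp}$ (a DVR) and $\cE'_\fp$ (Gorenstein), while $M_\fp$ inherits this splitting via the corresponding idempotents of $\cE_\fp$. Consequently $M_\fl = I_\fl H_\fl(\phi)$ for a unique ideal $I_\fl \trianglelefteq \cE_\fl$ at each $\fl$, with $I_\fl = \cE_\fl$ outside a finite set of primes. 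These data patch to a unique fractional ideal $I \trianglelefteq \cE$ with $I \otimes_A A_\fl = I_\fl$ for all $\fl$; Proposition~\ref{prop:sublattice} gives $I\H(\phi) = M$, so $I*\phi \cong \psi$. The equivalent reformulation then asserts that every $\cE$-stable sublattice $M \subseteq \H(\phi)$ is of the form $I\H(\phi)$, which is exactly what has been shown.

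\textbf{Main obstacle.} The crux is the local analysis at $\fp$ in Part~(3). One must verify that $M_\fp$ respects the connected/\'etale splitting of $H_\fp(\phi)$ (this uses the central idempotents of $\cE_\fp$) and that the global Gorenstein hypothesis on $\cE$ descends correctly to the factor $\cE'_\fp$, so that the rank-one freeness needed to write $M_\fp$ as $I_\fp H_\fp(\phi)$ is genuinely available on both summands. Once that local picture is secured, the construction of the global $I$ from the local data $\{I_\fl\}_\fl$ is routine local-global bookkeeping.
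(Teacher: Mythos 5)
Your proposal is correct and takes essentially the same route as the paper's proof: parts (2) and (3) mirror it almost verbatim, from freeness via Lemma~\ref{lem:linequiv}(2) to the local-global lattice argument at $\fp$ using the connected/\'etale splitting~\eqref{eq:Hpdecomp} and Gorenstein rank-one freeness, followed by the patching of the local ideals $I_\fl$ into a global $\cE$-ideal. One point in part~(1) where you are in fact more careful than the paper is the verification that $\End_k(I*\phi)$ is the order of an $\cE$-ideal, so that the target set is genuinely stable under the action: your step of replacing $I$ by its kernel closure $\Ann_{\cE}(\phi[I])$ (which leaves $\phi[I]$ and hence $I*\phi$ unchanged) and then invoking Lemma~\ref{lem:endker}(2) closes a detail the paper leaves implicit.
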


\begin{proof}
(1) By Lemma~\ref{lem:linequiv}.(1), we may consider the fractional ideals of $\mathcal{E}$ up to linear equivalence. The trivial ideal $I = \mathcal{E}$, considered as a $k\{\tau\}$-ideal, is generated by the trivial element, so $\mathcal{E} * \phi = \phi$ for any $\phi$. For two ideals $I,J$ it follows from the definition and commutativity that $(I \cdot J)*\phi = I * (J * \phi)$. As remarked above, for any ideal~$I$, the Drinfeld modules $\phi$ and $I * \phi$ are isogenous via the generator $u_I$ of $k\{\tau\}I$.
    
\noindent (2) This follows from Lemma~\ref{lem:linequiv}.(2).
   
\noindent (3) The proof is inspired by \cite[Proofs of Theorem~4.5 and Theorem~5.1]{Waterhouse}. Suppose that $\phi$ and $\psi$ are isogenous and that $R := \mathrm{End}_k(\psi)$ is the order of an $\mathcal{E}$-ideal, i.e., $R \simeq \mathcal{O}_I$ for some ideal $I \trianglelefteq \mathcal{E}$. We may write $\psi = \phi/G$ where the finite subgroup scheme $G$ is the kernel of the isogeny. We want to show that $\psi \cong I * \phi$. Since $I$ is a kernel ideal by Proposition~\ref{prop:Gorker}, by Proposition~\ref{prop:sublattice} this amounts to showing that the sublattice corresponding to the isogeny $\phi \to \psi$ with kernel $G$ is $I \mathbb{H}(\phi)$, up to linear equivalence.
    (Note also that the Drinfeld module $I * \phi$ indeed has endomorphism ring $u_I \mathcal{O}_I u_I^{-1} \simeq \mathcal{O}_I$ by Lemma~\ref{lem:endker}.(2).)
    
    By the dictionary between lattices and isogenies given above, the kernel~$G$ gives rise to a sublattice of $N_{\mathfrak{l}} \subseteq H_{\mathfrak{l}}(\phi)$ such that $H_{\mathfrak{l}}(\phi)/N_{\mathfrak{l}} 
    \simeq G_{\mathfrak{l}}$ for each $\mathfrak{l} \neq \mathfrak{p}$ and a sublattice $N_{\mathfrak{p}} \subseteq H_{\mathfrak{p}}(\phi)$ satisfying \eqref{eq:DieudModCondition} such that $H_{\mathfrak{p}}(\phi)/N_{\mathfrak{p}} 
    \simeq G_{\mathfrak{p}}$.
    The lattice $N_{\mathfrak{p}}$ in $H_{\mathfrak{p}}(\phi)$ is both a free left $\mathcal{O}_k$-module and a right $\mathcal{E}_{\mathfrak{p}}$-module; by the splittings of $\mathcal{E}_{\mathfrak{p}} = \mathcal{E}_{\tilde{\fp}} \oplus \mathcal{E}'_{\fp}$ and $H_{\fp}(\phi) = H^c_{\fp}(\phi) \oplus H^{\text{{\'e}t}}_{\fp}(\phi)$ in \eqref{eq:Hpdecomp} and their compatibility in~\eqref{eq:Tateatp1} and~\eqref{eq:Tateatp2}, we must have that $N_{\fp} = N_{\tilde{\fp}} \oplus N'_{\fp}$ splits as well, where $N_{\tilde{\fp}}$ is a sublattice of $H^c_{\fp}(\phi)$ and an $\mathcal{O}_k \otimes \mathcal{E}_{\tilde{\fp}}$-module, and $N'_{\fp}$ is a sublattice of $H^{\text{{\'e}t}}_{\fp}(\phi)$ and an $\mathcal{O}_k \otimes \mathcal{E}'_{\fp}$-module.
    
    As remarked in the proof of Proposition~\ref{prop:Gorker}, the Gorenstein property implies that $H_{\mathfrak{l}}(\phi)$ is free over $\mathcal{E}_{\mathfrak{l}}$ of rank~$1$ for all $\fl \neq \fp$, and $H^{\text{{\'e}t}}_{\fp}(\phi)$ is free over $\mathcal{E}'_{\fp}$.
    Hence, any sublattice of $H_{\mathfrak{l}}(\phi)$ is of the form $I_{\mathfrak{l}} \cdot H_{\mathfrak{l}}(\phi)$ for some local ideal $I_{\mathfrak{l}} \trianglelefteq \mathcal{E}_{\mathfrak{l}}$, and any sublattice of $H^{\text{{\'e}t}}_{\fp}(\phi)$ is of the form $I'_{\fp} \cdot H^{\text{{\'e}t}}_{\fp}(\phi)$ for some ideal $I'_{\fp} \trianglelefteq \mathcal{E}'_{\fp}$. Since $G$ is finite, we know that $I_{\mathfrak{l}} = \mathcal{E}_{\mathfrak{l}}$ for all but finitely many~$\mathfrak{l}$; note also that there are only finitely many $\nu \neq \tilde{\fp}$ over $\fp$ that contribute to $I'_{\fp}$.
    Recall that $\mathcal{E}_{\tilde{\mathfrak{p}}}$ is maximal by \cite[Corollary, p.~164]{JKYu}, hence a PID, so again any sublattice of $H^c_{\fp}(\phi) = (H_{\fp}(\phi))_{\tilde{\fp}}$ is of the form $I_{\tilde{\fp}} \cdot H^c_{\fp}(\phi)$ for a local principal ideal $I_{\tilde{\fp}} \trianglelefteq \cE_{\tilde{\fp}}$. 
    Note that at all places, we may scale the ideal generators to lie in the local endomorphism ring.
    We conclude that
    \[
    N_{\fp} = \left( I_{\tilde{\fp}} \cdot H^c_{\fp}(\phi) \right) \oplus \left( I'_{\fp} \cdot H^{\text{{\'e}t}}_{\fp}(\phi) \right) = I_{\fp} \cdot H_{\fp}(\phi)
    \]
    for some local ideal $I_{\mathfrak{p}} = I_{\tilde{\fp}} \oplus I'_{\fp}$ of~$\mathcal{E}_{\mathfrak{p}}$.
    
    These local ideals $I_{\fp}$ and $I_{\fl}$ for all $\fl \neq \fp$, i.e., local integral lattices, are the localizations of a global lattice (again since $I_{\mathfrak{l}} = \mathcal{E}_{\mathfrak{l}}$ for all but finitely many $\mathfrak{l}$), which is closed under the action of $\mathcal{E}$ since it is so everywhere locally by construction. Hence, it is a global ideal $I$, as we had to show.
\end{proof}

\begin{cor}\label{cor:end}
Suppose that $\cE= A[\pi]$ (so that either $\phi$ is ordinary or $k = \mathbb{F}_{\mathfrak{p}}$, by Lemma~\ref{lem:allEnd}). Then the action $I \mapsto I * \phi$ of the monoid of fractional ideals of $A[\pi]$ is free and transitive on the isomorphism classes in the isogeny class of $\phi$.
\end{cor}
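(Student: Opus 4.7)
The plan is to deduce the corollary directly from Theorem~\ref{thm:idealaction} and the Gorenstein property of $A[\pi]$. The hypothesis that $\cE = A[\pi]$ is legitimate in exactly the two cases listed, by Corollary~\ref{lem:allEnd}, so nothing further needs to be said about existence of such a $\phi$.

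First I would verify that $A[\pi]$ is Gorenstein. Writing $A[\pi] \cong A[x]/(m(x))$ with $m(x)$ the monic minimal polynomial of $\pi$ over $F$, the ring $A[\pi]$ is the quotient of the regular (hence Gorenstein) two-dimensional ring $A[x]$ by the principal ideal generated by the monic, and therefore non-zero-divisor, polynomial $m(x)$. Thus $A[\pi]$ is a complete intersection of codimension one in a Gorenstein ring, so $A[\pi]$ is Gorenstein; the property localizes at every prime $\fl\trianglelefteq A$, so $A[\pi]$ is Gorenstein in the sense used in Section~\ref{sec:KerProps}.

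With Gorensteinness in hand, Proposition~\ref{prop:Gorker} tells us that every nonzero ideal of $\cE = A[\pi]$ is a kernel ideal, so Theorem~\ref{thm:idealaction}.(2) immediately gives that the action $I \mapsto I \ast \phi$ is \emph{free} on linear equivalence classes. For transitivity, I would invoke Theorem~\ref{thm:idealaction}.(3), which states that under the Gorenstein hypothesis the action is transitive on those isomorphism classes in the isogeny class whose endomorphism ring is the order of an $\cE$-ideal. It therefore suffices to check that \emph{every} isomorphism class in the isogeny class has this property. But for any Drinfeld module $\psi$ isogenous to $\phi$, the Frobenius $\pi$ lies in $\End_k(\psi)$, so $\End_k(\psi) \supseteq A[\pi] = \cE$; that is, $R := \End_k(\psi)$ is an overorder of $\cE$ in $D$. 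Any such overorder is a finitely generated $\cE$-submodule of $D$ spanning $D$ over $F$, i.e.\ a fractional $\cE$-ideal, and tautologically $\cO_R = \{x \in D : xR \subseteq R\} = R$ (since $1 \in R$ forces $xR \subseteq R$ to give $x \in R$). Hence $R$ is the order of the $\cE$-ideal $R$ itself, and Theorem~\ref{thm:idealaction}.(3) applies to every isomorphism class.

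There is essentially no obstacle: the only non-routine input is the Gorenstein property of $A[\pi]$, and that follows from its presentation as a monogenic $A$-algebra. Everything else is bookkeeping that assembles the three parts of Theorem~\ref{thm:idealaction} with Proposition~\ref{prop:Gorker} and Corollary~\ref{lem:allEnd}.
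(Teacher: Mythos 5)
Your proof is correct and follows essentially the same route as the paper: reduce to integral ideals, deduce from the Gorenstein property of $A[\pi]$ that every ideal is a kernel ideal via Proposition~\ref{prop:Gorker}, get freeness from Theorem~\ref{thm:idealaction}.(2), and get transitivity from Theorem~\ref{thm:idealaction}.(3) after observing that every $\End_k(\psi)$ for $\psi$ isogenous to $\phi$ is an overorder of $A[\pi]$ and hence the order of the $\cE$-ideal $R=\End_k(\psi)$ itself. The only genuine variation is that you prove Gorensteinness of $A[\pi]$ from scratch via the complete-intersection presentation $A[\pi]\cong A[x]/(m(x))$, whereas the paper cites \cite[Proposition 4.10]{GP2} for the same fact; both are fine, and your self-contained argument (monogenic $A$-algebra, hence a codimension-one complete intersection in the regular ring $A[x]$, hence Gorenstein, and localizing preserves this) is the standard one.
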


\begin{proof}
Since we consider the fractional ideals up to linear equivalence, we may without loss of generality consider only integral $A[\pi]$-ideals. Since $A[\pi]$ is Gorenstein (cf. \cite[Proposition 4.10]{GP2}), every $A[\pi]$-ideal is a kernel ideal by Proposition~\ref{prop:Gorker}. The statement now follows from Theorem~\ref{thm:idealaction} since every endomorphism ring is an overorder of $A[\pi]$; note that all such overorders occur as endomorphism rings by Proposition~\ref{prop:porism}.
\end{proof}

\begin{rem}
\begin{enumerate}
    \item The ideal action already appears in \cite[Section 3]{HayesCFT} in a slightly different setting: fix an $A$-order $\mathcal{O}$ and consider the Picard group $\mathrm{Pic}(\mathcal{O})$, i.e., the quotient group of invertible $\mathcal{O}$-ideals modulo principal ideals. Hayes shows that $\mathrm{Pic}(\mathcal{O})$ acts on the isomorphism classes of Drinfeld modules whose endomorphism ring contains~$\mathcal{O}$. Invertible $\mathcal{O}$-ideals are proper and therefore have order $\mathcal{O}$; so this statement is consistent with the statement $\mathrm{End}_k(I * \phi) \supseteq u_I \mathcal{O}_I u_I^{-1} \cong \mathcal{O}_I$ which we prove in Lemma~\ref{lem:endker}.
    \item It follows from Theorem~\ref{thm:idealaction} that the number of isomorphism classes in the isogeny class is bounded below by the sum of the class numbers of the overorders of $\mathcal{E}$ and that equality holds if $\mathcal{E}$ is minimal and Bass, so that every overorder is Gorenstein. For rank~$2$ Drinfeld modules, this result can also be found in \cite[\S 6]{GekelerTAMS}  where the class numbers are given as products involving Dirichlet characters. In higher rank, analogous expressions for the class numbers of the orders in $D$ could be given.
    \item The problem of describing endomorphism rings of Drinfeld modules has been considered by several authors, see e.g.~\cite{Angles, GekelerFDM, JKYu}. Explicit algorithms to compute endomorphism rings were developed in \cite{GP} for Drinfeld modules of rank $2$ over $\mathbb{F}_{\mathfrak{p}}$ and in \cite{GP2} for any Drinfeld module with commutative endomorphism algebra; in \cite{KuhnPink}, the existence of an effective general algorithm is shown.
\end{enumerate}

\end{rem}


\section{Comparison with abelian varieties over finite fields}\label{sec:AV}

There are striking resemblances of the theory of Drinfeld modules over finite fields 
with the theory of abelian varieties over finite fields. Isogeny classes of such abelian varieties are also determined by the minimal or characteristic polynomial of their Frobenius endomorphism $\pi$, and it is an important open problem to describe the isomorphism classes within a fixed isogeny class. 
Indeed, precisely when the varieties are ordinary or defined over the prime field $\mathbb{F}_p$, there exist categorical equivalences between isomorphism classes of abelian varieties over $\mathbb{F}_q$ and certain $\mathbb{Z}[\pi, \bar{\pi}]$-ideals, where $\bar{\pi} = q/\pi$ is the dual of the Frobenius, also called the Verschiebung.

First, consider an isogeny class of simple ordinary abelian varieties over $\mathbb{F}_q$ determined by a Frobenius endomorphism $\pi$. It is known that any ordinary variety $A/\mathbb{F}_q$ admits a (Serre-Tate) canonical lifting $\tilde{A}$ to the Witt vectors $W = W(\overline{\mathbb{F}}_q)$, which may be embedded into $\mathbb{C}$. In \cite{DeligneOrdAV}, Deligne shows that the functor $A \mapsto H_1(\tilde{A} \otimes_W \mathbb{C})$ induces an equivalence of categories between isomorphism classes in the isogeny class determined by $\pi$ and free $\mathbb{Z}$-modules of rank $2 \dim(A)$ equipped with an endomorphism $F$ acting as $\pi$ and an endomorphism $V$ such that $FV = q$ playing the role of Verschiebung; these modules are often called Deligne modules. On the other hand, complex abelian varieties $A_{\mathbb{C}}$ are determined by lattices via the equivalence $A_{\mathbb{C}} \mapsto A_{\mathbb{C}}(\mathbb{C}) \cong \mathbb{C}^g/\Lambda$ induced from complex uniformization, and when $A_{\mathbb{C}}$ has CM through a CM-type $\Phi$, we may write $\Lambda = \Phi(I)$ for some fractional $\mathrm{End}(A_{\mathbb{C}})$-ideal~$I$. In this way, we may associate a fractional ideal $I$ to each ordinary abelian variety $A/\mathbb{F}_q$, since each variety over $\mathbb{F}_q$ has CM and therefore so does its canonical lifting~$\tilde{A}$. Linearly equivalent fractional ideals yield homothetic lattices and hence isomorphic abelian varieties, and homomorphisms between abelian varieties are described by quotient ideals.  Put differently, fractional ideals up to linear equivalence act on the isomorphism classes in the ordinary isogeny class.

By comparison, it should follow with a similar proof that ordinary Drinfeld modules over $k$ admit a canonical lifting to $\mathbb{C}_{\infty}$ of $A$-characteristic zero. On the one hand, Drinfeld modules over~$\mathbb{C}_{\infty}$ admit a analytic uniformization by a lattice $\Lambda \subseteq \mathbb{C}_{\infty}$ (where homothetic lattices describe isomorphic Drinfeld modules), which yields a bijection between lattices in $\mathbb{C}_{\infty}$ and Drinfeld modules over $\mathbb{C}_{\infty}$. On the other hand, the ideal action $\phi \mapsto I * \phi$ may be defined for arbitrary Drinfeld modules over any $A$-field (i.e., of any characteristic).
Indeed, as alluded to above, the Picard group of fractional ideals of an order $\mathcal{O}$ up to linear equivalence acts simply transitively on the isomorphism classes of Drinfeld modules over $\mathbb{C}_{\infty}$ with CM by $\mathcal{O}$. 
Ideals of $\mathcal{E}$ may be embedded in $\mathbb{C}_{\infty}$ as lattices, and every lattice $\Lambda \subseteq \mathbb{C}_{\infty}$ yields an ideal $\chi(\Lambda/\mathcal{E})\Lambda \trianglelefteq \mathcal{E}$.
One can show that if $u_I: \phi \to I * \phi$ is an isogeny and $\phi$ corresponds to the lattice~$\Lambda$, then $I*\phi$ corresponds to the lattice $I^{-1} \Lambda$, where $I^{-1} = (\mathcal{E}:I)$.
This shows that, like for abelian varieties, the ideal action for ordinary Drinfeld modules can equivalently be described in terms of lattices via analytic uniformization on the lifted modules.

Next, consider an isogeny class of simple abelian varieties over $\mathbb{F}_p$, determined by a characteristic polynomial of $\pi$ which does not have real roots, to ensure the endomorphism rings are all commutative. In \cite{CS1}, the authors show that such an isogeny class contains an element $A_w$ with minimal endomorphism ring, i.e., $\mathrm{End}_{\mathbb{F}_p}(A_w) = \mathbb{Z}[\pi,\bar{\pi}]$, which is Gorenstein. They then use this variety to show that the functor $A \mapsto \mathrm{Hom}(A,A_w)$ induces a contravariant equivalence between isomorphism classes in the isogeny class and reflexive $\mathbb{Z}[\pi,\bar{\pi}]$-modules, which are in turn equivalent to finite free $\mathbb{Z}$-modules with an endomorphism $F$ acting as $\pi$ and an endomorphism $V$ such that $FV = p$ which plays the role of $\bar{\pi}$. When the varieties are also ordinary, the authors also prove that their functor is equivalent to that of Deligne.

By comparison, for Drinfeld modules over $k = \mathbb{F}_{\mathfrak{p}}$, the existence of an isomorphism class~$\phi_w$ with minimal endomorphism ring $\mathrm{End}_k(\phi_w) = A[\pi]$ is guaranteed by Corollary~\ref{cor:Fp} and Lemma~\ref{lem:allEnd}. The functor $\phi \mapsto \mathrm{Hom}_k(\phi,\phi_w)$ from isomorphism classes in the isogeny class of $\phi_w$ to reflexive $A[\pi]$-modules can be proven to be fully faithful by using Tate's theorems for Drinfeld modules and mimicking the proof of fully faithfulness in \cite[Theorem 25]{CS1}. Essential surjectivity follows from the main result in \cite{LM}, when we view an $A[\pi]$-module as an $A$-matrix with characteristic polynomial determined by that of $\pi$. 
Moreover, suppose that $\phi = I * \phi_w$ for some (necessarily kernel) ideal $I \trianglelefteq \mathcal{E} = A[\pi]$ and recall that $\phi_T = u_I (\phi_w)_T u_I^{-1}$ for $u_I \in k\{\tau\}$ with $k\{\tau\}I = k\{\tau\}u_I$. Then, using that $A[\pi] = \mathrm{End}_{k}(\phi_w) = \{ v \in k\{\tau\} : v (\phi_w)_T = (\phi_w)_T v \}$, we see that 
\[
\begin{split}
    \mathrm{Hom}_k(\phi, \phi_w) & = \{ u \in k\{\tau\} : u \phi_T = (\phi_w)_T u \} \\
    & = \{ u \in k\{\tau\} : u u_I (\phi_w)_T = (\phi_w)_T u u_I \} \\
    & = \{ u \in k\{\tau\} : u u_I \in A[\pi] \} \\
    & = k\{\tau\} u_I \cap A[\pi] = I,
\end{split}
\]
where the last equality follows from the definition of a kernel ideal, see Definition \ref{defKI1} above and cf.~Equation~\eqref{eq:kernelDE}. This shows that the two constructions are in fact equivalent for Drinfeld modules.


 \renewcommand{\bibliofont}{\normalsize}
\providecommand{\bysame}{\leavevmode\hbox to3em{\hrulefill}\thinspace}
\providecommand{\MR}{\relax\ifhmode\unskip\space\fi MR }
\providecommand{\MRhref}[2]{%
  \href{http://www.ams.org/mathscinet-getitem?mr=#1}{#2}
}
\providecommand{\href}[2]{#2}

\end{document}